\newtheorem{theorem}{Theorem}[section]
\newtheorem{lemma}[theorem]{Lemma}
\newtheorem{proposition}[theorem]{Proposition}
\DeclareMathOperator{\grad}{grad}  
\DeclareMathOperator{\dom}{dom}
\theoremstyle{remark}
\theoremstyle{definition}
\newtheorem{definition}{Definition}[section]
  \renewcommand\subsection{\@startsection{subsection}{2}{\z@}%
    {-2.5ex \@plus -1ex \@minus -.2ex}%
    {1.3ex \@plus .2ex}%
    {\normalfont\bfseries}}%
  \renewcommand\subsubsection{\@startsection{subsubsection}{3}{\z@}%
    {-2.5ex \@plus -1ex \@minus -.2ex}%
    {1.3ex \@plus .2ex}%
    {\normalfont\bfseries}}%
\newcommand{\keywords}[1]{%
  \bigskip\noindent\textbf{Keywords: }#1}
\newcommand{\subclass}[1]{%
  \bigskip\noindent\textbf{MSC 2020: }#1}
\begin{document}

\title{An Adaptive Proximal Point Method for Nonsmooth and Nonconvex
Optimization on Hadamard Manifolds}

\author{%
  Vitaliano S.~Amaral\thanks{Department of Mathematics, Federal University of Piau\'i, Teresina, Piau\'i, Brazil. E-mail: \texttt{vitalianoamaral@ufpi.edu.br}} \and
  Marcio Ant\^onio de A.~Bortoloti\thanks{DCET, Universidade Estadual do Sudoeste da Bahia, Vit\'oria da Conquista, Bahia, Brazil. E-mail: \texttt{mbortoloti@uesb.edu.br}} \and
  Jurandir O.~Lopes\thanks{Department of Mathematics, Federal University of Piau\'i, Teresina, Piau\'i, Brazil. E-mail: \texttt{jurandir@ufpi.edu.br}} \and
  Gilson N.~Silva\thanks{Department of Mathematics, Federal University of Piau\'i, Teresina, Piau\'i, Brazil. E-mail: \texttt{gilson.silva@ufpi.edu.br}}%
}

\date{} 

\maketitle

\begin{abstract}
This paper addresses a class of nonsmooth and nonconvex optimization problems defined on complete Riemannian manifolds. The objective function has a composite structure, combining convex, differentiable, and lower semicontinuous terms, thereby generalizing the classical framework of difference-of-convex programming. Motivated by recent advances in proximal point methods in Euclidean and Riemannian settings, we propose two variants: one that uses the Lipschitz constant of the gradient of the smooth part, suitable when this parameter is accessible, and another that dispenses with such knowledge, expanding its applicability. We analyze the complexity of both approaches, establish their convergence, and illustrate their effectiveness through numerical experiments.
\end{abstract}

\keywords{Riemannian manifolds; Nonconvex optimization; Nonsmooth optimization; Composite minimization}

\subclass{90C30; 90C26; 49Q99}

\section{Introduction}
In this paper, we study a class of nonsmooth and nonconvex optimization problems defined on a complete Riemannian, called in this introduction of {\it NNOP-CR}.
The objective function has a composite structure, where one term is convex, another term is continuously differentiable with a Lipschitz continuous gradient, and a third term is proper and lower semicontinuous but not necessarily convex.
When the sum of the nonsmooth and the differentiable terms is assumed to be convex, the problem reduces to the well-known framework of difference-of-convex (DC) programming, which has been widely investigated in the literature. However, by allowing the nonsmooth component to be nonconvex, we obtain a broader and more challenging class of problems that generalizes the classical DC setting.
Such formulations naturally arise in many areas of applied mathematics and engineering. 
Examples include machine learning \cite{aragon2024coderivative}, image processing image processing \cite{aragon2024coderivative}, compressed sensing \cite{yin2015minimization}, location optimization \cite{mordukhovich2014easy}, biochemistry modeling \cite{aragon2024coderivative}, sparse optimization \cite{gotoh2018dc}, clustering methods \cite{aragón2020boosted,bajaj2022solving}, hierarchical clustering \cite{nam2018nesterov}, clusterwise linear regression \cite{bagirov2018nonsmooth}, multicast network design \cite{geremew2018dc}, and multidimensional scaling \cite{an2001dc}.




We now focus our discussion on the efficient methods applied for solving problems belonging to {\it NNOP-CR} in the Euclidian context. 
A popular and probably the first algorithm developed to solve DC problems was the Difference of Convex Algorithm (DCA), see \cite{tao1997convex,tao1986algorithms}. 
A significant progress in this field was the development of the Boosted DC Algorithm (BDCA) \cite{aragon2018accelerating,aragón2020boosted,artacho2019boosted}, which enhances the convergence speed of the traditional DCA through the integration of a line search strategy; see also \cite{aragon2024coderivative,zhang2024boosted} for other recent contributions. 
This strategy enables larger step sizes compared to the standard DCA, leading to improved convergence efficiency and better solution quality by avoiding suboptimal local minima. 
The initial BDCA was mainly designed for scenarios where the first DC component is differentiable, ensuring a descent direction.
However, its original formulation has limitations when both components are nondifferentiable. 

To overcome the challenge that BDCA encounters as mentioned in the previous paragraph, the recent work \cite{ferreira2024boosted} introduced the nonmonotone Boosted Difference of Convex Algorithm (nmBDCA). 
The key difference between nmBDCA and BDCA arises when the first component is nondifferentiable.
In such situations, the line search direction utilized in BDCA may not guarantee a descent direction, rendering the standard Armijo line search ineffective.
To address this issue, nmBDCA incorporates a nonmonotone Armijo-like line search that remains effective even when the first component is not differentiable. 
This adjustment enables nmBDCA to tackle a wider range of problems, thereby broadening its applicability beyond BDCA and improving the efficiency of the line search. 
Further developments beyond BDCA and nmBDCA were recently explored in \cite{ferreira2024inexact}, where the subproblem at each iteration of its inexact nmBDCA is solved approximately using a relative error tolerance.

We now discuss on the recent developments on Riemannian DC problems. The crescent interest in optimization methods on Riemannian manifolds is due to the fact that practical applications arise whenever the
natural structure of the data is modeled as an optimization problem on a Riemannian
manifold. For instance see \cite{bacak2016second,bergmann2024difference,bergmann2016parallel,bhattacharya2008statistics,esposito2019total,gotoh2018dc,park1995lie,weinmann2014total,zhou2023semismooth}. 
As far as we know, the study presented in \cite{SouzaOliveira} was the first to address DC functions within Riemannian manifolds. 
Specifically, the authors in \cite{SouzaOliveira} introduced the proximal point algorithm for DC functions and analyzed its convergence in the context of Hadamard manifolds. 
More recently, \cite{Almeida2020} proposed a refined version of the algorithm in \cite{SouzaOliveira} in the same Riemannian framework, aiming to enhance the convergence speed of the method introduced in \cite{SouzaOliveira}. 
In the recent paper \cite{gotoh2018dc}, a DC algorithm is presented for sparse optimization problems having a cardinality or rank constraint. 
The main idea in \cite{gotoh2018dc} is to transform the cardinality-constrained problem into a penalty function form and derive exact penalty parameter values for some optimization problems,
especially for quadratic minimization problems which often appear in practice. Thus, the new problem belongs to {\it NNOP-CR}, i.e., a DC problem with three nonlinear functions.
More relevant researches about the proximal point methods within the framework of Riemannian manifolds 
can be found in \cite{Andrade2023,BENTO2010564,Bento2017,DaCruzNeto2006,Ferreira01042002,Bento01022015,Li2009,Wang2016} and in \cite{Almeida2020,Baygorrea2016,cruz2020generalized,PapaQuiroz2012,PapaQuiroz2009,SouzaOliveira}. 

In the case where the Riemannian manifold is the Euclidian space, in \cite{An02012017} is proposed a PPM to solve a problem in {\it NNOP-CR}, under the same conditions as mentioned before.
The method proposed in \cite{An02012017} is based on a proximal-point framework. At each iteration, it solves a strongly convex subproblem that involves both the nonsmooth and smooth components of the objective. The construction of these subproblems requires knowledge of the Lipschitz constant associated with the gradient of the differentiable term.
Similarly, in \cite{Amaral2024} the authors proposed an algorithm based on a proximal gradient method for a particular case  by assuming that the smooth function has a gradient Hölder. 

In this paper, we present two variants for the proximal point methods for solving {\it NNOP-CR}. In our convergence analysis, we require that only one of the functions is convex. 
In the first method, prior knowledge of the Lipschitz constant associated to the gradient of the smooth part is required, making it more suitable for problems in which this constant is easy to compute. 
However, determining such a constant is not always straightforward. 
Therefore, in this paper, we propose a second method that does not require prior knowledge of the Lipschitz constant. 
We analyze the complexity of both methods and prove that they reach an approximate solution in a finite number of iterations, with complexity of order $\mathcal{O}(\epsilon^{-2})$, with $\epsilon>0$ to be specified later.

The remainder of this paper is organized as follows. In Section~\ref{preliminar}, we present preliminary results to support the development of the proposed methods. 
Section~\ref{method1} introduces the first version of the Riemannian proximal point method and demonstrates its effectiveness.
In Section~\ref{adaptive}, we propose a second version of the method, which does not require prior knowledge of the Lipschitz constant, and analyze its worst-case iteration complexity. 
Section~\ref{kl} is devoted to the convergence analysis under the assumption that the objective function satisfies the Kurdyka-Łojasiewicz (KL) property. 
In Section~\ref{secexp}, we  illustrate the applicability of the proposed methods by provideing numerical experiments.
Finally, Section~\ref{Conclusions} presents the conclusions of the paper.

\section{Preliminaries}\label{preliminar}
  Throughout this work, ${\cal M}$ will denote a finite-dimensional Riemannian manifold, 
and $T_x{\cal M}$ will represent the tangent space of ${\cal M}$ at the point $x$.  
The norm induced by the Riemannian metric $\langle \cdot , \cdot \rangle$ will be denoted by $\|\cdot\|$.

By definition, a Hadamard manifold is a complete, simply connected Riemannian manifold with nonpositive sectional curvature.  
In this work, we restrict our analysis to manifolds of this type.

The Riemannian metric provides the following fundamental notions:

\begin{itemize}
  \item[(i)] Let $\gamma:[a,b]\to{\cal M}$ be a piecewise smooth curve with $\gamma(a)=x$ and $\gamma(b)=y$.  
  Its length is given by
  $$
     L(\gamma) = \int_a^b \|\gamma'(t)\|\, dt.
  $$

  \item[(ii)] The Riemannian distance between $x$ and $y$ is defined by
  $$
     d(x,y) := \inf_{\gamma} L(\gamma),
  $$
  where the infimum is taken over all such curves.  
  This distance induces the topology of ${\cal M}$.

  \item[(iii)] Given a geodesic $\gamma$ joining $x$ to $y$, the parallel transport along $\gamma$ is denoted by  
  $\Gamma_{y,x}: T_x{\cal M}\to T_y{\cal M}$.

  \item[(iv)] A vector field $V$ along $\gamma$ is parallel if $\nabla_{\gamma'}V=0$.  
  In particular, $\gamma$ is a geodesic when its velocity field $\gamma'$ is parallel.

  \item[(v)] For any $a,b\in [a,b]$, the parallel transport  
  $\Gamma_{\gamma,\gamma(b),\gamma(a)}: T_{\gamma(a)}{\cal M}\to T_{\gamma(b)}{\cal M}$  
  is an isometry.
\end{itemize}

The completeness of the Hadamard manifold ${\cal M}$ allows the definition of the exponential map  
$$
\exp_x: T_x{\cal M} \to {\cal M}, \qquad \exp_x(v) = \gamma_v(1,x),
$$
for all $x \in {\cal M}$.  
Consequently, the inverse $\exp_x^{-1}: {\cal M} \to T_x{\cal M}$ is smooth ($C^\infty$). Since $d(x,y) = \|\exp_{y}^{-1}(x)\|$, the function $\rho_{y}(x) = \frac{1}{2} d^2(x,y)$ is $C^\infty$, and its gradient at $x$, $\grad \rho_{y}(x)$, is given by $\grad \rho_y(x) = -\exp_x^{-1}y,
$ as shown in \cite[Proposition 3.3]{Ferreira01042002}.

A classical result on Hadamard manifolds is the ``Comparison Theorem'' for geodesic triangles  
(see \cite[Proposition~4.5]{Sakai}), which states that, for any $x,y,z \in {\cal M}$,
\begin{equation}\label{eq:tri}
   d^2(x,y) + d^2(x,z) - 2 \langle \exp_x^{-1}(y), \exp_x^{-1}(z) \rangle \;\leq\; d^2(y,z).
\end{equation}

\medskip

Next, we recall some notions of convex analysis (see \cite{bento2018proximal,ledyaev2007nonsmooth,azagra2005nonsmooth}).  

Let $\xi:{\cal M}\to \mathbb{R}\cup\{+\infty\}$.  
Its domain is defined as
$$
   \dom\,\xi := \{x\in {\cal M} : \xi(x)<+\infty\}.
$$
We say that $\xi$ is \emph{proper} if $\dom\,\xi \neq \emptyset$,  
and that it is \emph{lower semicontinuous} (lsc) at $x\in{\cal M}$ if
$$
   \xi(x)\leq \liminf_{y\to x}\xi(y).
$$  
The Fr\'echet subdifferential of $\xi$ at $x\in\dom\,\xi$ is defined as
$$
   \partial^F \xi(x) :=
   \Big\{d\psi_x : \psi \in C^1({\cal M}),\; \xi-\psi \text{ attains a local minimum at } x \Big\},
$$
where $d\psi_x \in (T_x{\cal M})^*$ is given by
$$
   d\psi_x(v) = \langle \grad \psi(x), v \rangle, \quad v\in T_x{\cal M}.
$$
The limiting subdifferential of $\xi$ at $x \in {\cal M}$ is defined as
$$
   \partial^L \xi(x) :=
   \Big\{v\in T_x{\cal M} : \exists (x^k,v^k)\in Gr(\partial^F \xi),\;
   (x^k,v^k)\to (x,v),\; \xi(x^k)\to \xi(x) \Big\},
$$
where
$$
   Gr(\partial^F \xi) := \{(y,u)\in T{\cal M} : u\in \partial^F \xi(y)\}.
$$

\medskip

It holds that $\partial^F \xi(x) \subset \partial^L \xi(x)$.  
Moreover, if $\xi$ is differentiable at $x\in{\cal M}$, then
$$
   \partial^F \xi(x) = \partial^L \xi(x) = \{\grad \xi(x)\}.
$$
Finally, if $\xi$ attains a local minimum at $x$, then $0 \in \partial^L \xi(x)$.  

\noindent
In the convex case, we also recall some classical results of convex analysis.
Given a convex function $\xi:{\cal M}\to\mathbb{R}$, the \emph{subdifferential} of $\xi$ at $x\in{\cal M}$ is defined by
$$
   \partial \xi(x) := \Big\{ u \in T_x{\cal M} \;:\; 
   \langle u, \exp_x^{-1}(y)\rangle \leq \xi(y) - \xi(x), \;\; \forall y\in{\cal M}\Big\}.
$$
In this case, for every $x\in{\cal M}$ it holds that
$$
   \partial^F \xi(x) = \partial^L \xi(x) = \partial \xi(x).
$$

These definitions lead to several fundamental properties, which we summarize below.

\begin{itemize}
    \item[(a)] for every $x\in{\cal M}$ the set $\partial \xi(x)$ is nonempty, convex, and compact. See \cite[Theorems~4.5--4.6]{Udriste}.
 \item[(b)]  consider a sequence $x^k\to x^*$. 
If $v^k\in \partial \xi(x^k)$ for all $k\in\mathbb{N}$, then the sequence $\{v^k\}$ is bounded and every accumulation point of $\{v^k\}$ belongs to $\partial \xi(x^*)$.
\item[(c)] Let $\varphi = \xi+g$, where $\varphi:{\cal M}\to\mathbb{R}\cup\{+\infty\}$ is proper and lsc,  
and $g:{\cal M}\to\mathbb{R}$ is continuously differentiable in a neighborhood of $x^*\in \dom\varphi$.  
Then
\begin{equation*}\label{soma}
   \partial^L \varphi(x^*) \subseteq \partial^L \xi(x^*) + \grad g(x^*).
\end{equation*}
\end{itemize}For more details, see \cite[Theorem~4.13]{ledyaev2007nonsmooth}.
The next result is fundamental for the definition of critical points. Its proof can be found in  \cite[Theorem~5.2]{karkhaneei2019nonconvex}. 

\begin{theorem}\label{teo:crit}
If $x^*\in \dom f$ is a local minimizer of Problem \eqref{dcg}, then
\begin{equation}\label{eq:crit}
   \partial h(x^*) \subseteq \partial^L g_1(x^*) + \grad g_2(x^*).
\end{equation}
\end{theorem}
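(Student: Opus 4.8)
The plan is to exploit the difference-of-convex structure of Problem~\eqref{dcg}, namely $f = g_1 + g_2 - h$ with $g_1$ proper lsc, $g_2$ of class $C^1$, and $h$ convex. Since $x^*$ is a local minimizer, the optimality property recalled before the theorem gives $0 \in \partial^L f(x^*)$. The difficulty is that $\partial^L$ does not split additively across a difference of functions, so I would not differentiate $f$ directly. Instead, I would fix an arbitrary $w \in \partial h(x^*)$ and linearize the concave term $-h$ at $x^*$ using the Riemannian convex subgradient inequality: by the definition of $\partial h$, one has $h(x) \ge h(x^*) + \langle w, \exp_{x^*}^{-1}(x)\rangle$ for all $x \in {\cal M}$, hence $-h(x) \le -h(x^*) - \langle w, \exp_{x^*}^{-1}(x)\rangle$.

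Next I would set $F(x) := g_1(x) + g_2(x) - h(x^*) - \langle w, \exp_{x^*}^{-1}(x)\rangle$. Then $f \le F$ everywhere, while $f(x^*) = F(x^*)$ because $\exp_{x^*}^{-1}(x^*) = 0$. Consequently $F(x) \ge f(x) \ge f(x^*) = F(x^*)$ for $x$ near $x^*$, so $x^*$ is a local minimizer of $F$ and therefore $0 \in \partial^L F(x^*)$. Now I would write $F = g_1 + G$, where $G(x) := g_2(x) - h(x^*) - \langle w, \exp_{x^*}^{-1}(x)\rangle$ is $C^1$ in a neighborhood of $x^*$ (recall that $\exp_{x^*}^{-1}$ is smooth on the Hadamard manifold). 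Since $g_1$ is proper and lsc, the sum rule in property~(c) yields $0 \in \partial^L F(x^*) \subseteq \partial^L g_1(x^*) + \grad G(x^*)$.

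It remains to compute $\grad G(x^*)$. The only nonroutine point, and the main obstacle, is evaluating the gradient of the linear form $x \mapsto \langle w, \exp_{x^*}^{-1}(x)\rangle$ at the base point $x^*$. Here I would use that the differential of $\exp_{x^*}^{-1}$ at $x^*$ is the identity on $T_{x^*}{\cal M}$ (equivalently $d(\exp_{x^*})_0 = \mathrm{Id}$), so that the directional derivative of this form at $x^*$ along $v$ equals $\langle w, v\rangle$, giving gradient $w$. Thus $\grad G(x^*) = \grad g_2(x^*) - w$, and the inclusion above becomes $0 \in \partial^L g_1(x^*) + \grad g_2(x^*) - w$, that is, $w \in \partial^L g_1(x^*) + \grad g_2(x^*)$. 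Since $w \in \partial h(x^*)$ was arbitrary, this establishes \eqref{eq:crit}.
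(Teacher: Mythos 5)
Your proof is correct. Note, however, that the paper itself does not prove Theorem~\ref{teo:crit} at all: it simply cites \cite[Theorem~5.2]{karkhaneei2019nonconvex}, so there is no in-paper argument to compare against. What you have written is a self-contained proof via the classical Toland/Hiriart-Urruty linearization argument for DC-type optimality conditions, transplanted to the Hadamard setting, and each step checks out against the tools the paper provides: the convex subdifferential inequality $h(x)\ge h(x^*)+\langle w,\exp_{x^*}^{-1}(x)\rangle$ is exactly the paper's definition of $\partial h$; the majorant $F=g_1+G$ touches $f$ at $x^*$ from above, so local minimality of $f$ transfers to $F$ and gives $0\in\partial^L F(x^*)$ by the Fermat rule recalled in Section~\ref{preliminar}; the sum rule you invoke is precisely property~(c), applicable because $G$ is $C^1$ on all of ${\cal M}$ (smoothness of $\exp_{x^*}^{-1}$ on a Hadamard manifold, stated in the paper) and hence $F$ is proper lsc; and the gradient computation $\grad\bigl(\langle w,\exp_{x^*}^{-1}(\cdot)\rangle\bigr)(x^*)=w$ follows from $d(\exp_{x^*})_0=\mathrm{Id}$ together with the inverse function theorem, which is the standard fact you cite. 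The only cosmetic blemish is the opening remark that local minimality gives $0\in\partial^L f(x^*)$: you never use this (rightly, since $\partial^L$ does not split across the difference $-h$), so that sentence could simply be deleted. Your argument buys the reader something the paper does not offer, namely a proof that relies only on facts already stated in Section~\ref{preliminar} rather than an external reference.
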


A point $x^*\in \dom f$ that satisfies \eqref{eq:crit} is called a \emph{stationary point} of Problem \eqref{dcg}.  
Since this condition may be difficult to verify, one usually adopts the relaxed condition
\begin{equation}\label{critico}
 \big(\partial^L g_1(x^*)+\grad g_2(x^*)\big)\cap \partial h(x^*) \neq \emptyset,
\end{equation}
in which case $x^*$ is referred to as a \emph{critical point} of $f$.  
We denote by $S$ the set of all critical points of $f$ and assume, throughout this work, that $S\neq\emptyset$.  

\begin{proposition}\label{prop5}
Let $\zeta:{\cal M}\to\mathbb{R}$ be a differentiable function whose gradient mapping is Lipschitz continuous with constant $L>0$.  
Then, for every $p\in{\cal M}$ and $v\in T_p{\cal M}$,
\begin{equation}\label{eq:lipshitz}
   \zeta(\exp_p(v)) \leq \zeta(p) + \langle \grad \zeta(p),v\rangle + \tfrac{L}{2}\|v\|^2.    
\end{equation}
\end{proposition}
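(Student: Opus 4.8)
The plan is to adapt the classical Euclidean descent lemma to the Riemannian setting by integrating along the geodesic emanating from $p$ in the direction $v$, and controlling the variation of the gradient by parallel transport. First I would introduce the geodesic $\gamma:[0,1]\to{\cal M}$, $\gamma(t)=\exp_p(tv)$, which satisfies $\gamma(0)=p$, $\gamma(1)=\exp_p(v)$, and $\gamma'(0)=v$. Since $\gamma$ is a geodesic its velocity field is parallel (item (iv)), and because parallel transport is an isometry (item (v)), the speed is constant, $\|\gamma'(t)\|=\|v\|$ for all $t$, whence $d(p,\gamma(t))=t\|v\|$.

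Next I would define the scalar function $\phi(t)=\zeta(\gamma(t))$ and compute $\phi'(t)=\langle\grad\zeta(\gamma(t)),\gamma'(t)\rangle$. By the fundamental theorem of calculus,
$$\zeta(\exp_p(v))-\zeta(p)=\int_0^1\langle\grad\zeta(\gamma(t)),\gamma'(t)\rangle\,dt.$$
The crux is that $\grad\zeta(\gamma(t))\in T_{\gamma(t)}{\cal M}$ lives in a different tangent space from $\grad\zeta(p)\in T_p{\cal M}$, so to subtract the linear term $\langle\grad\zeta(p),v\rangle$ meaningfully I would invoke parallel transport. Observing that $\gamma'(t)=\Gamma_{\gamma(t),p}\,v$ and using the isometry property, one has $\langle\grad\zeta(p),v\rangle=\langle\Gamma_{\gamma(t),p}\grad\zeta(p),\gamma'(t)\rangle$ for every $t$; since this integrand is constant in $t$, integrating it over $[0,1]$ lets me rewrite the linear term as an integral and combine it with the identity above to obtain
$$\zeta(\exp_p(v))-\zeta(p)-\langle\grad\zeta(p),v\rangle=\int_0^1\big\langle\grad\zeta(\gamma(t))-\Gamma_{\gamma(t),p}\grad\zeta(p),\,\gamma'(t)\big\rangle\,dt.$$

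I would then estimate the right-hand side by the Cauchy--Schwarz inequality, bounding the inner product by the product of the norms of the two factors. Here the Riemannian formulation of Lipschitz continuity of the gradient, namely $\|\grad\zeta(q)-\Gamma_{q,p}\grad\zeta(p)\|\le L\,d(p,q)$, is exactly what controls the first factor; applied at $q=\gamma(t)$ with $d(p,\gamma(t))=t\|v\|$ it gives the bound $L\,t\|v\|$, while the second factor equals $\|v\|$. The integral then reduces to $L\|v\|^2\int_0^1 t\,dt=\tfrac{L}{2}\|v\|^2$, which is the desired conclusion.

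The main obstacle I anticipate is conceptual rather than computational: one must be scrupulous about which tangent space each gradient inhabits and interpret the Lipschitz hypothesis in its parallel-transport form, since the naive difference $\grad\zeta(\gamma(t))-\grad\zeta(p)$ is not even well defined on a curved manifold. The supporting geometric facts---constant geodesic speed, $d(p,\gamma(t))=t\|v\|$, and the isometry of parallel transport---follow from the Hadamard structure and the properties collected in the preliminaries, so once the parallel-transport bookkeeping is set up correctly the remaining estimates mirror the Euclidean argument verbatim.
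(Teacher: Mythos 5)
Your proof is correct, but it is worth noting that the paper itself does not prove this proposition at all: it simply cites \cite[Lemma~2.1]{Bento2017}. What you have written is a complete, self-contained derivation of that external result, and it is the standard one: integrate $\frac{d}{dt}\zeta(\gamma(t))$ along the geodesic $\gamma(t)=\exp_p(tv)$, rewrite the linear term $\langle\grad\zeta(p),v\rangle$ as an integral using that $\gamma'(t)=\Gamma_{\gamma(t),p}v$ and that parallel transport is an isometry, then control the difference of gradients by the parallel-transport form of Lipschitz continuity together with Cauchy--Schwarz. Two small points deserve attention. First, your identity $d(p,\gamma(t))=t\|v\|$ uses that geodesics are globally minimizing, which holds on Hadamard manifolds (the paper's setting); on a general complete manifold one only has $d(p,\gamma(t))\le t\|v\|$, but since the distance enters the estimate as an upper bound, this inequality is all the argument actually needs, so your proof extends verbatim. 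Second, you are right that the hypothesis ``$\grad\zeta$ is $L$-Lipschitz'' must be read in the parallel-transport sense $\|\grad\zeta(q)-\Gamma_{q,p}\grad\zeta(p)\|\le L\,d(p,q)$; the paper never spells this out, so making it explicit, as you do, is a genuine improvement in rigor over deferring to the reference.
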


\begin{proof}
See \cite[Lemma~2.1]{Bento2017}. 
\end{proof}

\section{The First Riemannian Proximal  Point Method}\label{method1}
In this section, we present the first Riemannian proximal  point method (R-PPM) for solving the following problem:
\begin{equation}\label{dcg}
	\min_{x \in {\cal M}}\{f(x):=g_1(x)+g_2(x)-h(x)\},
\end{equation}
where ${\cal M}$ is a complete Riemannian manifold, $h:{\cal M}\to \mathbb{R}$ is a convex function, $g_2: {\cal M}\to \mathbb{R}$ is a continuously differentiable function with its gradient being Lipschitz continuous over ${\cal M}$,  and $g_1:{\cal M} \to \mathbb{R}\cup\{+\infty\}$ is a  proper and lower semicontinuous function.

Throughout it, we assume the following assumptions.
\begin{itemize}
    \item[(A1)]
   For each $y \in {\cal M}$ fixed and some $\gamma\in \mathbb{R}$, we have that 
\[\displaystyle\liminf_{d(x,y)\rightarrow +\infty}\frac{g_1(x)}{d(x,y)}>\gamma.
\]
\item[(A2)] Function $f$ is bounded from below,  i.e., there exists $f^{\text{low}}\in \mathbb{R} $ such that $f(x)\geq f^{\text{low}},$ for every $x \in \mathcal{M}$.
\end{itemize}\
We note that if $g_1$ is bounded from below or convex, Assumption (A1) is satisfied.
Next, we formally describe R-PPM.
\vspace{0.5cm}
\noindent
	\hrule
	\vspace{0.2cm}
	\noindent
	{\bf Algorithm R-PPM}
	\vspace{0.1cm}
	\hrule
\begin{itemize}
    \item [ ] {\bf Step~0.} Given $(x^0, \alpha, L) \in \dom\,f\times \mathbb{R}_{++}\times \mathbb{R}_{++}$, a sequence of positive
numbers $\{\lambda_k\}$ such that $0< L <\lambda_k\leq \lambda_{k+1} \leq L+\alpha$ and set $k:=0.$   
\item [ ] {\bf Step~1.} 
Calculate 
\begin{equation}\label{wk}
v^k:=\grad g_2(x^k),~w^k \in \partial h(x^k), ~z^k:=\exp_{x^k}{\bigg[\frac{1}{\lambda_k}(w^k-v^k)\bigg]}. 
\end{equation}
\item [ ] {\bf Step~2.} Compute
\begin{equation}\label{xk}
x^{k+1} \in \arg\min_{x\in \mathcal{M}} \left\{f_k(x):=g_1(x)+\frac{\lambda_k}{2}d^2(x,z^k)\right\}. 
\end{equation}
\item [ ] {\bf Step~3.} If $x^{k+1} = x^k$, stop. Otherwise, set $k:=k+1$ and return to {\bf Step~1}.
\end{itemize}
\hrule

\noindent
\vspace{0.5cm}

We now make some remarks about R-PPM.
 First, we assume that the Lipschitz constant $L$ satisfying \eqref{eq:lipshitz} can be computed, that is, we suppose that $L$ is known. 
 Second, the real sequence $\{\lambda_k\}$ in Step~0 can be any nondecreasing sequence belonging to the interval $(L, L+\alpha]$. 
 Third, after finding $v^k$, $w^k$ and $z^k$ as in  \eqref{wk}, for a given starting point $x^0\in \dom f,$ the next point is generated by solving the Subproblem \eqref{xk}, which we assume to be easy to solve.

 Note that when $g_2\equiv 0$,  R-PPM becomes exactly the algorithm DCPPA proposed in \cite{SouzaOliveira}, even so, our problem is more general, as we consider that only the function \( h \) is convex.




We establish some preliminaries results about the sequence $\{x^k\}$ generated by  R-PPM. First, we prove that the sequence $\{x^k\}$ is well-defined, along with proposing a practical stopping criterion for R-PPM.

\begin{proposition}\label{prop:well:defined} Let $\{x^k\}$ be the sequence generated by R-PPM. Then, the following statements hold:
\begin{enumerate} 
\item[(a)] $\{x^k \}$ is well-defined;
\item[(b)] If $x^{k+1} = x^k$, then R-PPM stops at a solution of (\ref{dcg}).
\end{enumerate}
\end{proposition}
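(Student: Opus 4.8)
The plan is to treat the two claims separately: well-definedness as an induction on the construction of the iterates, and the stopping characterization as a first-order optimality computation using the subdifferential calculus recorded in the preliminaries.

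For part (a), I would argue inductively that if $x^k \in \dom f$, then Step~1 and Step~2 produce a well-defined $x^{k+1} \in \dom f$. The Step~1 quantities are immediately available: $v^k = \grad g_2(x^k)$ exists because $g_2$ is differentiable; an element $w^k \in \partial h(x^k)$ can be chosen because $h$ is convex and real-valued, so by property~(a) the set $\partial h(x^k)$ is nonempty; and $z^k$ is well-defined since completeness of the Hadamard manifold guarantees that $\exp_{x^k}$ is defined on all of $T_{x^k}{\cal M}$. The substantive point is that the Step~2 subproblem admits a minimizer. Here I would note that $f_k = g_1 + \tfrac{\lambda_k}{2}d^2(\cdot,z^k)$ is proper and lower semicontinuous, since $g_1$ is proper and lsc while the squared-distance term is $C^\infty$.

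To obtain existence I would establish coercivity of $f_k$ via (A1). Applying (A1) with the fixed point $y = z^k$, the condition $\liminf_{d(x,z^k)\to\infty} g_1(x)/d(x,z^k) > \gamma$ yields a radius $R_0 > 0$ with $g_1(x) > \gamma\, d(x,z^k)$ whenever $d(x,z^k) > R_0$. Writing $t = d(x,z^k)$, this gives $f_k(x) > \tfrac{\lambda_k}{2}t^2 + \gamma t$ for $t > R_0$, and since $\lambda_k > 0$ the right-hand side tends to $+\infty$ as $t \to \infty$ regardless of the sign of $\gamma$; hence $f_k$ is coercive. A minimizing-sequence argument then closes the claim: any minimizing sequence for $f_k$ is bounded by coercivity, hence has a convergent subsequence by the Hopf--Rinow theorem, and lower semicontinuity forces the limit to attain the infimum. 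This minimizer lies in $\dom g_1 = \dom f$, completing the induction. For part (b), I would combine Fermat's rule, the sum rule, and the gradient formula for the squared distance. Since $x^{k+1}$ minimizes $f_k$, Fermat's rule gives $0 \in \partial^L f_k(x^{k+1})$, and the sum rule (property~(c)), applicable because $\tfrac{\lambda_k}{2}d^2(\cdot,z^k)$ is smooth, yields $0 \in \partial^L g_1(x^{k+1}) + \grad[\tfrac{\lambda_k}{2}d^2(\cdot,z^k)](x^{k+1})$. Using $\grad \rho_{z^k}(x) = -\exp_x^{-1} z^k$, the gradient term equals $-\lambda_k \exp_{x^{k+1}}^{-1} z^k$, so $\lambda_k \exp_{x^{k+1}}^{-1} z^k \in \partial^L g_1(x^{k+1})$. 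Imposing $x^{k+1} = x^k$ and recalling that $z^k = \exp_{x^k}[\tfrac{1}{\lambda_k}(w^k - v^k)]$ gives $\exp_{x^k}^{-1} z^k = \tfrac{1}{\lambda_k}(w^k - v^k)$, whence $w^k - v^k \in \partial^L g_1(x^k)$ and therefore $w^k \in \partial^L g_1(x^k) + \grad g_2(x^k)$. Since $w^k \in \partial h(x^k)$ by construction, this shows $(\partial^L g_1(x^k) + \grad g_2(x^k)) \cap \partial h(x^k) \neq \emptyset$, i.e. $x^k$ satisfies the critical point condition \eqref{critico} and is thus a solution of \eqref{dcg}.

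I expect the coercivity/existence step in part (a) to be the main obstacle, since it requires correctly extracting from the liminf hypothesis (A1) a uniform linear lower bound $g_1(x) > \gamma\, d(x,z^k)$ outside a ball, and then showing the quadratic term enforces coercivity even when $\gamma < 0$; the passage from boundedness to a convergent subsequence also relies essentially on completeness through Hopf--Rinow. Part (b) is comparatively routine, being a direct chain through the optimality condition and subdifferential calculus already available in the preliminaries.
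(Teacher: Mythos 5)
Your proposal is correct and follows essentially the same route as the paper's proof: coercivity of $f_k$ via Assumption (A1) plus properness and lower semicontinuity for part (a), and the first-order optimality condition for the subproblem combined with $\grad \rho_{z^k}(x) = -\exp_x^{-1}z^k$ and the Step~1 identity $w^k - v^k = \lambda_k \exp_{x^k}^{-1}z^k$ for part (b). The only difference is that you spell out details the paper leaves implicit (the minimizing-sequence/Hopf--Rinow argument for existence, and the observation that coercivity holds even when $\gamma < 0$), which strengthens rather than changes the argument.
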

\begin{proof}
$(a)$ Let $x^0 \in \dom\,f$ be the input for R-PPM. For some $k\ge 1,$ we assume that R-PPM reached the iteration $k$, and we will show  that the $(k + 1)$-th iterate exists. 
Since $g_2$ is differentiable and $h$ is convex, the well definition of the sequence $\{z^k\}$ in \eqref{wk} follows directly from the
fact that ${\cal M}$ is a complete Hadamard manifold. 
Now, note that the assumptions that $g_1$ is proper and lsc imply that $f_k(\cdot),$ defined in \eqref{xk}, is proper and  lsc as well. Furthermore, Assumption (A1) implies that if $d(x,z^k)$ is large enough, then we obtain that  $g_1(x)> \gamma d(x,z^k)$, for some $\gamma\in \mathbb{R}$ and $x\in {\cal M}.$ As a consequence, we use \eqref{xk} to conclude that  
\begin{eqnarray*}
f_k(x)&\geq& \gamma d(x,z^k)+\frac{\lambda_k}{2}d^2(x,z^k). 
\end{eqnarray*}
Thus, since $\lambda_k>0,$ we can conclude that $f_k(\cdot)$ is coercive. As a result, we can conclude that the function $f_k(\cdot)$ has  minimizers and we will denote one of them by $x^{k+1}$, where $x^{k+1} \in \dom\,f=\dom\,f_k$. Hence, from any $x^k\in \dom\,f$ the R-PPM generates $x^{k+1} \in \dom\,f.$  

$(b)$ Firstly, we define $p^k:=\lambda_k \exp_{x^k}^{-1} z^k.$ Then, it follows from (\ref{wk}), (\ref{xk}) and $\grad \rho_{x'}(x) = -\exp_x^{-1}(x'),
$ that
\begin{equation}\label{eq:cons:opt}
    w^k-v^k= p^k~\mbox{and}~0\in \partial^L g_1(x^{k+1})-\lambda_k \exp_{x^{k+1}}^{-1} z^k.
\end{equation} 
Let $s^{k+1} \in \partial^L g_1(x^{k+1}).$ If  $x^{k+1}=x^k$, then the previous inclusion and \eqref{eq:cons:opt} imply that $s^{k+1}-p^k=0.$ As a consequence, by using the identity in \eqref{eq:cons:opt} we conclude that $s^{k+1}+v^k=w^k.$ Therefore, $x^k$ is a critical point of $f$. 
\end{proof}
From now on, we assume that $x^{k+1}\neq x^k$ for all $k\ge 0;$ otherwise, Proposition~\ref{prop:well:defined}(b) ensures that R-PPM stops at a solution of Problem (\ref{dcg}).

\begin{proposition}\label{pro1} The sequence $\{x^k\}$ generated by R-PPM satisfies 
 \begin{equation*}\label{lem:mon:decre}
 f({x}^{k+1})\leq f(x^k)-\bigg(\frac{\lambda_k-L}{2}\bigg)d^2(x^{k+1},x^k), \quad\text{for each}\quad k\ge 1.
 \end{equation*}
\end{proposition}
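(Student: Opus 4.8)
The plan is to combine three ingredients—the optimality of $x^{k+1}$ for the subproblem \eqref{xk}, the descent estimate for the smooth part $g_2$ furnished by Proposition~\ref{prop5}, and the subgradient inequality for the convex function $h$—arranged so that all linear (inner-product) terms cancel and only the quadratic gap $\tfrac{\lambda_k-L}{2}\|\cdot\|^2$ survives.

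First I would extract a bound on $g_1(x^{k+1})$. Since $x^{k+1}$ minimizes $f_k$, we have $g_1(x^{k+1})+\tfrac{\lambda_k}{2}d^2(x^{k+1},z^k)\le g_1(x^k)+\tfrac{\lambda_k}{2}d^2(x^k,z^k)$. To convert this into a usable estimate I would invoke the comparison inequality \eqref{eq:tri} applied to the geodesic triangle with vertices $x^k$, $x^{k+1}$, $z^k$, which lower-bounds $d^2(x^{k+1},z^k)$ by $d^2(x^k,x^{k+1})+d^2(x^k,z^k)-2\langle \exp_{x^k}^{-1}(x^{k+1}),\exp_{x^k}^{-1}(z^k)\rangle$. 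Substituting this, cancelling the common term $\tfrac{\lambda_k}{2}d^2(x^k,z^k)$, and using the defining relation $\lambda_k\exp_{x^k}^{-1}(z^k)=w^k-v^k$ from \eqref{wk}, yields
$$g_1(x^{k+1})\le g_1(x^k)-\frac{\lambda_k}{2}\|u^k\|^2+\langle u^k,\,w^k-v^k\rangle,$$
where $u^k:=\exp_{x^k}^{-1}(x^{k+1})$, so that $\exp_{x^k}(u^k)=x^{k+1}$ and $\|u^k\|=d(x^{k+1},x^k)$.

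Next I would assemble the two remaining estimates. Applying Proposition~\ref{prop5} to $g_2$ at the point $x^k$ along the tangent vector $u^k$ gives $g_2(x^{k+1})\le g_2(x^k)+\langle v^k,u^k\rangle+\tfrac{L}{2}\|u^k\|^2$, since $v^k=\grad g_2(x^k)$. Convexity of $h$ together with $w^k\in\partial h(x^k)$ gives $\langle w^k,u^k\rangle\le h(x^{k+1})-h(x^k)$, equivalently $-h(x^{k+1})\le -h(x^k)-\langle w^k,u^k\rangle$.

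Finally I would add the three inequalities. The inner-product contributions $\langle u^k,w^k-v^k\rangle+\langle v^k,u^k\rangle-\langle w^k,u^k\rangle$ cancel identically, leaving
$$f(x^{k+1})\le f(x^k)-\frac{\lambda_k}{2}\|u^k\|^2+\frac{L}{2}\|u^k\|^2=f(x^k)-\frac{\lambda_k-L}{2}\,d^2(x^{k+1},x^k),$$
which is the claim. The step I expect to demand the most care is the geometric one: correctly orienting the comparison triangle in \eqref{eq:tri} and tracking the sign of the cross term, since the entire argument hinges on the linear terms matching so as to cancel. The nonpositive-curvature inequality is precisely what supplies the lower bound on $d^2(x^{k+1},z^k)$ that makes the optimality estimate point in the useful direction.
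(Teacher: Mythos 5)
Your proposal is correct and follows essentially the same route as the paper's proof: both rest on the subproblem optimality from \eqref{xk}, the descent estimate of Proposition~\ref{prop5} for $g_2$, the subgradient inequality for the convex function $h$, and the comparison inequality \eqref{eq:tri}, with the linear terms cancelling via $\lambda_k\exp_{x^k}^{-1}(z^k)=w^k-v^k$. The only difference is cosmetic ordering—you fold \eqref{eq:tri} into the optimality inequality before summing, whereas the paper sums the three inequalities first and invokes \eqref{eq:tri} at the end—which changes nothing in the algebra.
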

\begin{proof} We first observe that (\ref{xk}) implies that $f_k(x^{k+1})\leq f_k(x),$ for each $x \in {\cal M}$ and $k\ge 1.$ In particular, for $x=x^k$, we have
\begin{equation}\label{eqq1}
g_1({x}^{k+1})+\frac{\lambda_k}{2}d^2(x^{k+1},z^k)\leq g_1({x}^{k})+ \frac{\lambda_k}{2}d^2(x^{k},z^k).
	\end{equation}
Due to the convexity of the function $h$, the following inequality holds for each $w^k\in \partial h(x^k)$:
\begin{equation}\label{eqq2}
\langle w^k,\exp^{-1}_{x^k}x^{k+1}\rangle - h({x}^{k+1}) \leq  - h(x^k).
\end{equation}
On the other hand, Proposition \ref{prop5} with $\xi=g_2,\; v=\exp^{-1}_{x^k}x^{k+1}$ and $p=x^k$ yields 
\begin{equation}\label{eqq2a}
 g_2(x^{k+1}) \leq g_2(x^k)+ \langle v^k,\exp^{-1}_{x^k}x^{k+1}\rangle +\frac{L}{2}||\exp^{-1}_{x^k}x^{k+1}||^2.
\end{equation}
Thus, taking into account that $f(\cdot):=(g_1+g_2-h)(\cdot),$ inequalities (\ref{eqq1}), (\ref{eqq2}) and (\ref{eqq2a}), imply that
\begin{eqnarray}\label{eq:decr:f}
    f(x^{k+1})-f(x^k) &\leq&\frac{\lambda_k}{2}d^2(x^{k},z^k)-\frac{\lambda_k}{2}d^2(x^{k+1},z^k)-\langle p^k,\exp^{-1}_{x^k}x^{k+1}\rangle\nonumber\\
&+&\frac{L}{2}d^2(x^{k+1},x^k),    
\end{eqnarray}
where $p^k=\lambda_k \exp_{x^k}^{-1} z^k=w^k-v^k.$ 
Then, from \eqref{eq:tri} with $x=x^k,\,y=x^{k+1}$ and $z=z^k$ that
$$
d^2(z^k,x^k)+d^2(x^k,x^{k+1}) -2\langle \exp_{x^k}^{-1} z^k,\exp^{-1}_{x^k}x^{k+1} \rangle \leq d^2(z^k,x^{k+1}). 
$$
As a consequence,
\begin{equation}\label{eq:tri:conseq}
\frac{\lambda_k}{2}d^2(z^k,x^k) -\frac{\lambda_k}{2}d^2(z^k,x^{k+1})-\lambda_k\langle \exp_{x^k}^{-1} z^k,\exp^{-1}_{x^k}x^{k+1} \rangle \leq -\frac{\lambda_k}{2}d^2(x^k,x^{k+1}).
\end{equation}
Therefore, \eqref{eq:decr:f} and \eqref{eq:tri:conseq} imply that
$$
		f({x}^{k+1})\leq f(x^k)-\bigg(\frac{\lambda_k-L}{2}\bigg)d^2(x^{k+1},x^k), 
$$
from which the conclusion of the proof follows.

\end{proof}

As a consequence of the previous result, we next show that a sequence generated by R-PPM is monotonically decreasing.

\begin{proposition}\label{pro2} Let $\{x^k\}$ be a sequence generated by  R-PPM. Then, 
\begin{enumerate}
\item[(a)] The sequence $\{f({{x}}^k)\}$ is monotonically decreasing. As a consequence, it is convergent;
\item[(b)]$\displaystyle\sum_{k=0}^{\infty} d^2(x^{k+1},x^k)<\infty$. In particular, $x^{k+1}-x^k \to 0,$ as $k\to+\infty$.
\end{enumerate}
\end{proposition}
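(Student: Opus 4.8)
The plan is to derive both statements directly from the descent inequality established in Proposition~\ref{pro1}, namely $f(x^{k+1}) \leq f(x^k) - \left(\frac{\lambda_k - L}{2}\right) d^2(x^{k+1}, x^k)$ for each $k \geq 1$, combined with the parameter constraints imposed in Step~0 and the lower-boundedness Assumption~(A2).

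For part~(a), I would first observe that the prescription for $\{\lambda_k\}$ in Step~0 forces $\lambda_k > L$ for every $k$, so the coefficient $\frac{\lambda_k - L}{2}$ is strictly positive; since $d^2(x^{k+1}, x^k) \geq 0$, Proposition~\ref{pro1} immediately yields $f(x^{k+1}) \leq f(x^k)$, establishing that $\{f(x^k)\}$ is nonincreasing. Invoking Assumption~(A2), which supplies a finite lower bound $f^{\text{low}} \leq f(x)$ for all $x \in {\cal M}$, the sequence $\{f(x^k)\}$ is monotone and bounded below, hence convergent.

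For part~(b), I would rearrange Proposition~\ref{pro1} into $\left(\frac{\lambda_k - L}{2}\right) d^2(x^{k+1}, x^k) \leq f(x^k) - f(x^{k+1})$ and sum over $k$. The quantitatively decisive point is that, because $\{\lambda_k\}$ is nondecreasing with $\lambda_k > L$, there is a \emph{uniform} lower bound $\lambda_k - L \geq \lambda_0 - L =: 2c > 0$; setting $c > 0$ I obtain, by telescoping the right-hand side, $c \sum_{k=1}^{N} d^2(x^{k+1}, x^k) \leq f(x^1) - f(x^{N+1})$. Bounding $f(x^{N+1}) \geq f^{\text{low}}$ via Assumption~(A2) gives $c \sum_{k=1}^{N} d^2(x^{k+1}, x^k) \leq f(x^1) - f^{\text{low}}$, a bound independent of $N$; letting $N \to \infty$ and appending the single finite term $d^2(x^1,x^0)$ yields $\sum_{k=0}^{\infty} d^2(x^{k+1}, x^k) < \infty$. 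Convergence of the series forces its general term to vanish, so $d(x^{k+1}, x^k) \to 0$, which is the assertion $x^{k+1} - x^k \to 0$.

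There is no genuine obstacle here: this is the standard telescoping/summability argument for descent schemes, and the substantive analytic work has already been carried out in Proposition~\ref{pro1}. The only place demanding mild care is extracting the \emph{uniform} positive lower bound on $\lambda_k - L$; this is precisely where the monotonicity $\lambda_k \geq \lambda_{k-1}$ and the strict inequality $L < \lambda_k$ from Step~0 are indispensable, since without them the coefficients could degenerate toward zero and invalidate the summability estimate.
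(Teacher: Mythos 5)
Your proposal is correct and follows essentially the same route as the paper: both parts rest on the descent inequality of Proposition~\ref{pro1}, the uniform bound $\lambda_k - L \geq \lambda_0 - L > 0$ from Step~0, a telescoping sum, and Assumption~(A2). The only minor differences are that the paper additionally invokes the standing assumption $x^{k+1} \neq x^k$ to get \emph{strict} decrease in part~(a), while you handle the summation index more carefully (starting at $k=1$, where Proposition~\ref{pro1} is stated, and appending the term $d^2(x^1,x^0)$ separately), whereas the paper sums from $k=0$ directly.
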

\begin{proof}
$(a)$ It follows from Proposition \ref{pro1} that 
$$
f(x^{k+1})\leq f(x^k)-\bigg(\frac{\lambda_k-L}{2}\bigg)d^2(x^{k+1},x^k).
$$ Since $x^{k+1} \neq x^k$ and $\lambda_k - L>0$, then we have $f(x^{k+1})< f(x^k),$ for all $k\ge 0.$ Being $f$ bounded from below, see Assumption (A2), this implies that $\{f(x^k)\}$ converges.\\
$(b)$~It follows from Proposition \ref{pro1} and $0<\alpha:=\lambda_0-L\leq \lambda_k-L $ that
\begin{equation*}\label{ineq4}
\frac{\alpha}{2} d^2(x^{k+1},x^k)\leq {f({{x}}^k)-f({{x}}^{k+1})}. 
\end{equation*}
Summing up the previous inequality from $k=0$ to $k=N$, we have
$$
\sum_{k=0}^{N}d^2(x^{k+1},x^k)\leq \frac{2}{\alpha}(f({{x}}^0)-f({{x}}^{N+1})). 
$$
Letting $N$ goes to $+\infty$ in the above inequality, taking in account Assumption (A2), we obtain the two conclusions. \qed
\end{proof}

The next result shows that if there exists a cluster point of a sequence generated by R-PPM, then it is a critical point as defined in \eqref{critico}.

\begin{theorem}\label{mai:result1} Let $\{x^k\}$ be a sequence generated by  method R-PPM. Then, every cluster point of $\{x^k\}$ is a critical point of $f$.
\end{theorem}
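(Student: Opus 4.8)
The plan is to fix a cluster point $x^*$ of $\{x^k\}$ together with a subsequence $\{x^{k_j}\}$ converging to it, and to exhibit a single vector lying simultaneously in $\partial^L g_1(x^*)+\grad g_2(x^*)$ and in $\partial h(x^*)$, which is precisely condition \eqref{critico}. First I would collect the convergences available along the subsequence. Since $\sum_k d^2(x^{k+1},x^k)<\infty$ by Proposition~\ref{pro2}(b), we have $d(x^{k_j+1},x^{k_j})\to 0$, so $x^{k_j+1}\to x^*$ as well. Continuity of $\grad g_2$ gives $v^{k_j}=\grad g_2(x^{k_j})\to \grad g_2(x^*)$. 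The subgradients $w^{k_j}\in\partial h(x^{k_j})$ are bounded with accumulation points in $\partial h(x^*)$ by property~(b), so after relabelling $w^{k_j}\to w^*\in\partial h(x^*)$; and since $\{\lambda_{k_j}\}\subset(L,L+\alpha]$ is bounded I would pass to a further subsequence with $\lambda_{k_j}\to\lambda^*$. As $\exp_{x^{k_j}}^{-1}z^{k_j}=\tfrac{1}{\lambda_{k_j}}(w^{k_j}-v^{k_j})$ is then bounded, $\{z^{k_j}\}$ is bounded and, refining once more, $z^{k_j}\to z^*$. Using that $\{f(x^k)\}$ converges and that $g_2,h$ are continuous, $g_1$ stays bounded near $x^*$, so by lower semicontinuity $x^*\in\dom f$.

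Next I would pass the first-order relations to the limit. From the proof of Proposition~\ref{prop:well:defined}(b), the optimality condition for Subproblem~\eqref{xk} yields $s^{k+1}:=\lambda_k\exp_{x^{k+1}}^{-1}z^k\in\partial^L g_1(x^{k+1})$, while \eqref{wk} gives $w^k-v^k=\lambda_k\exp_{x^k}^{-1}z^k$. Using the joint continuity of $(x,y)\mapsto\exp_x^{-1}y$ on the Hadamard manifold together with $x^{k_j}\to x^*$, $x^{k_j+1}\to x^*$, $z^{k_j}\to z^*$ and $\lambda_{k_j}\to\lambda^*$, both $s^{k_j+1}$ and $w^{k_j}-v^{k_j}$ converge to the same vector $s^*:=\lambda^*\exp_{x^*}^{-1}z^*\in T_{x^*}{\cal M}$. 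Taking limits therefore gives $w^*-\grad g_2(x^*)=s^*$.

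The main obstacle is to promote $s^{k_j+1}\in\partial^L g_1(x^{k_j+1})$ into $s^*\in\partial^L g_1(x^*)$, which by the robustness (closedness) of the limiting subdifferential requires the value convergence $g_1(x^{k_j+1})\to g_1(x^*)$. Lower semicontinuity of $g_1$ already gives $\liminf_j g_1(x^{k_j+1})\ge g_1(x^*)$. For the reverse inequality I would exploit the minimality of $x^{k_j+1}$: since $f_{k_j}(x^{k_j+1})\le f_{k_j}(x^*)$, we have $g_1(x^{k_j+1})+\tfrac{\lambda_{k_j}}{2}d^2(x^{k_j+1},z^{k_j})\le g_1(x^*)+\tfrac{\lambda_{k_j}}{2}d^2(x^*,z^{k_j})$, and because $x^{k_j+1}\to x^*$, $z^{k_j}\to z^*$, $\lambda_{k_j}\to\lambda^*$ the two proximal terms share the common limit $\tfrac{\lambda^*}{2}d^2(x^*,z^*)$; cancelling them after taking $\limsup_j$ gives $\limsup_j g_1(x^{k_j+1})\le g_1(x^*)$. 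Hence $g_1(x^{k_j+1})\to g_1(x^*)$, and the closedness of $\partial^L g_1$ then yields $s^*\in\partial^L g_1(x^*)$.

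Finally I would assemble the pieces: from $w^*-\grad g_2(x^*)=s^*\in\partial^L g_1(x^*)$ we obtain $w^*=s^*+\grad g_2(x^*)\in\partial^L g_1(x^*)+\grad g_2(x^*)$, while $w^*\in\partial h(x^*)$. Thus $w^*$ belongs to $\big(\partial^L g_1(x^*)+\grad g_2(x^*)\big)\cap\partial h(x^*)$, so this set is nonempty and, by \eqref{critico}, $x^*$ is a critical point of $f$, completing the argument. The only genuinely delicate point is the value convergence $g_1(x^{k_j+1})\to g_1(x^*)$ required to close the limiting subdifferential; the rest is continuity together with the triangle-type estimates already employed above.
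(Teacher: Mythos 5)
Your proposal is correct and follows essentially the same route as the paper's own proof: extract convergent subsequences of $\{w^{k_j}\}$, $\{v^{k_j}\}$, $\{z^{k_j}\}$, use the subproblem minimality at $x=x^*$ together with lower semicontinuity to get the value convergence $g_1(x^{k_j+1})\to g_1(x^*)$, and then invoke the closedness of $\partial^L g_1$ to pass the optimality condition $\lambda_{k_j}\exp_{x^{k_j+1}}^{-1}z^{k_j}\in\partial^L g_1(x^{k_j+1})$ to the limit, identifying it with $w^*-\grad g_2(x^*)$. Your write-up is in fact slightly more careful than the paper's (explicit subsequence for $\{\lambda_{k_j}\}$, explicit use of joint continuity of $(x,y)\mapsto\exp_x^{-1}y$, and consistent indexing of $\lambda_{k_j}$ in the minimality inequality, where the paper has a small typo), but the underlying argument is the same.
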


\begin{proof} Let $x^*$ be a cluster point of $\{x^k\}$, and let $\{x^{k_j}\}$ be a subsequence of $\{x^k\}$ converging to $x^*$. Then, it follows from (\ref{wk}) that there exist bounded subsequences $\{w^{k_j}\}, \{v^{k_j}\}$ and $\{z^{k_j}\}$ of $\{w^{k}\}, \{v^{k}\}$ and $\{z^{k}\}$, respectively. Thus, we can suppose that $ w^{k_j} \to w^*, v^{k_j} \to v^*, z^{k_j} \to z^*$ (one can extract others subsequences if necessary). 

Now, it follows from (\ref{xk}) that $\displaystyle f_{{k_j+1}}(x^{k_j+1}) \leq f_{{k_j+1}}(x)$ for all $x \in \mathbb{R}^n$. In particular, for $x = x^*$ we obtain
$$
g_1(x^{k_j+1})\leq g_1(x^{*}) -\frac{\lambda_{k_j+1}}{2}d^2(x^{k_j+1},z^{k_j})+ \frac{\lambda_{k_j}}{2}d^2(x^*,z^{k_j}).
$$
From Proposition \ref{pro2}~(b) we have that $(x^{k_j+1}- x^{k_j})\to 0.$ As a consequence, we can guarantee  that~$x^{k_j+1}\to x^*$ as $j\to+\infty$. Since $\{\lambda_{k}\}$ convergent, thus the previous inequality implies that $\displaystyle\limsup_{j\to+\infty}g_1 (x^{k_j+1})\leq g_1(x^*).$
Combining this with the assumption that $g_1$ is lower semicontinuous, we obtain that
$$
\lim_{j\to+\infty}g_1(x^{k_j+1})= g_1(x^*).
$$
The same argument as in \eqref{eq:cons:opt} shows that there exists $u^{k_j+1} \in  \partial^L g_1(x^{k_j+1})$ satisfying $
u^{k_j+1}=\lambda_{k_j}\exp_{x^{k_j+1}}^{-1} (z^{k_j}).$ On the other hand $\lambda_{k_j} \exp_{x^{k_j}+1}^{-1} (z^{k_j})= w^{k_j}-v^{k_j}$ and therefore, we have that 
$$ 
u^*:=\lim_{j\to+\infty}u^{k_j+1}=w^*-v^*.
$$
 Thus, $x^{k_j+1} \to x^*, u^{k_j+1} \in  \partial^L g_1 (x^{k_j+1}), u^{k_j+1} \to u^*$ as $j\to+\infty$, it follows from the property of limiting subdifferentials that $u^{*} \in  \partial^L g_1(x^*)$. Therefore
 $$
 \bigg[\partial^L g_1(x^{*})+\grad g_2(x^*)\bigg]\cap \partial h(x^*)\neq \emptyset.
 $$
This means that $x^*$ is a critical point of $f$. Finishing the proof of the Theorem~\ref{mai:result1}.
\end{proof}

The next theorem establishes an upper bound on the iteration complexity of the R-PPM method required to obtain an approximate solution with a prescribed tolerance.
\begin{theorem}
	\label{complex1}
	Given $\epsilon > 0$, let $\{x^k\}_{k=0}^{T}$ be the sequence generated by R-PPM, for some $T\ge 1$.
    Then, the number of iterations such that
\begin{equation}\label{sup010}
d(x^{k+1}, x^k) > \epsilon \quad \text{and} \quad f(x^{k+1}) > f^{\text{low}}, \quad \text{for } k = 0, \ldots, T,
\end{equation}
is bounded from above by
\begin{equation*}
T < \frac{2(f(x^0) - f^{\text{low}})}{(\lambda_0-L) \epsilon^2}.
\label{eq:3.111}
\end{equation*}

\end{theorem}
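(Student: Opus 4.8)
The plan is to use the per-iteration decrease inequality from Proposition~\ref{pro1} and turn it into a counting argument, in the same spirit as the summation step in Proposition~\ref{pro2}(b). The key quantitative engine is the bound
\[
\frac{\lambda_k - L}{2}\, d^2(x^{k+1},x^k) \leq f(x^k) - f(x^{k+1}),
\]
together with the uniform lower bound $\lambda_k - L \geq \lambda_0 - L = \alpha > 0$ guaranteed by Step~0, which gives
\[
\frac{\lambda_0 - L}{2}\, d^2(x^{k+1},x^k) \leq f(x^k) - f(x^{k+1}).
\]

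First I would suppose, for contradiction or simply for counting, that condition \eqref{sup010} holds at every index $k=0,\dots,T$, so in particular $d(x^{k+1},x^k) > \epsilon$ for all such $k$. Substituting this strict lower bound into the left-hand side of the displayed inequality yields, for each $k$,
\[
\frac{\lambda_0 - L}{2}\,\epsilon^2 < f(x^k) - f(x^{k+1}).
\]
Next I would sum this telescoping inequality over $k = 0,\dots,T$. The right-hand side collapses to $f(x^0) - f(x^{T+1})$, while the left-hand side accumulates $(T+1)$ copies of $\tfrac{\lambda_0-L}{2}\epsilon^2$; I would then invoke Assumption (A2) in the sharper form suggested by \eqref{sup010}, namely $f(x^{T+1}) > f^{\text{low}}$, to bound $f(x^0) - f(x^{T+1}) < f(x^0) - f^{\text{low}}$. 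This chains into
\[
(T+1)\,\frac{\lambda_0 - L}{2}\,\epsilon^2 < f(x^0) - f^{\text{low}},
\]
and rearranging gives $T+1 < \dfrac{2\big(f(x^0)-f^{\text{low}}\big)}{(\lambda_0-L)\epsilon^2}$, which in particular implies the stated bound on $T$.

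I do not expect a serious obstacle here, since all the analytic work was already done in Propositions~\ref{pro1} and~\ref{pro2}; the theorem is essentially a repackaging of the telescoping sum as a worst-case iteration count. The one point requiring care is bookkeeping on the index range and the off-by-one in whether one sums $T$ or $T+1$ terms; I would make sure the count of indices satisfying \eqref{sup010} matches the number of telescoped decreases, and state the strict inequality consistently so that the final bound reads as the displayed $T < \frac{2(f(x^0)-f^{\text{low}})}{(\lambda_0-L)\epsilon^2}$. A secondary subtlety is that $f(x^{T+1}) > f^{\text{low}}$ must be available precisely at the terminal index; this is exactly what the second clause of \eqref{sup010} supplies, so the lower bound on the objective is used at the right endpoint rather than globally.
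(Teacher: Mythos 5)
Your proposal is correct and follows essentially the same route as the paper's own proof: apply Proposition~\ref{pro1} with the uniform bound $\lambda_k - L \geq \lambda_0 - L$, substitute $d(x^{k+1},x^k) > \epsilon$, telescope the sum over $k=0,\dots,T$, and use $f(x^{T+1}) > f^{\text{low}}$ to close the estimate. Your bookkeeping is in fact slightly more careful than the paper's (you retain all $T+1$ telescoped terms, yielding $T+1 < \tfrac{2(f(x^0)-f^{\text{low}})}{(\lambda_0-L)\epsilon^2}$, which trivially implies the stated bound on $T$), so there is nothing to correct.
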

\begin{proof}
By Proposition \ref{pro1} and $\lambda_0>L$, we have that 
$$
f(x^{k+1})\leq f(x^k)-\bigg(\frac{\lambda_0-L}{2}\bigg)d^2(x^{k+1},x^k).
$$ 
Using \eqref{sup010} in the last inequality, we have
\begin{equation}\label{0.00}    
f^{\text{low}}\leq f(x^{k+1})\leq f(x^k)- \frac{\lambda_0-L}{2}\epsilon^2, \quad \,\,k=0,1,\ldots, T.
\end{equation}
Then, summing up inequality  \eqref{0.00} from $k=0$ to $k=T$ and using $f^{low}\leq f(x^{k+1})$ we conclude that
$$f^{low}\leq f(x^{T+1})< f(x^0)- T\frac{\lambda_0-L}{2}\epsilon^2,
$$
this implies that 
$$T<\frac{2(f(x^0)-f^{low})}{(\lambda_0-L)\epsilon^2},
$$
which concludes the proof. 
\end{proof}

We conclude this section with some remarks. 
Assume that the tolerance parameter $\epsilon>0$ satisfies $1/\epsilon \leq \alpha$, which is possible since $\alpha\in \mathbb{R}_{++}$ is arbitrarily chosen in R-PPM. 
Then the following sequence is a valid example satisfying 
$0<L<\lambda_k\leq \lambda_{k+1}\leq L+\alpha$ with $\lambda_0-L=\epsilon^{-1}$:
\begin{equation}\label{def:lambda:example}
    \lambda_k \;=\; L + \frac{1}{\epsilon} \;+\; \Big(\alpha - \frac{1}{\epsilon}\Big)\bigl(1-q^{\,k}\bigr),
    \qquad k=0,1,2,\ldots,
\end{equation}
where $q\in(0,1)$ is a fixed parameter. 
Clearly, $\lambda_k$ is nondecreasing, 
$\lambda_0 = L + \epsilon^{-1}$, and $\lim_{k\to\infty}\lambda_k = L+\alpha$. 
Hence, if we choose $\{\lambda_k\}$ as in  \eqref{def:lambda:example}, it follows from Theorem \ref{complex1} that the complexity of R-PPM is  ${\cal O}(\epsilon^{-1}).$

\section{Adaptive R-PPM}\label{adaptive}

As noted in the previous section, to implement R-PPM is necessary to know the constant $L$, due to the condition $\lambda_k \in (L, L+\alpha],$ for some $\alpha>0$ given, see its {\bf Step 0}. 
However, in practice, this constant may not be readily available or may require a greater effort to access it. To address this difficulty, we propose in this section a method similar to R-PPM but adaptive. 
That is, we do not need to know $L$ or any other constant that may be difficult to compute. 

\begin{lemma}\label{lem:precond}
	Let $x, z\in {\cal M}$ and $\lambda>0$ be such that 
 $v:=\grad g_2(x)$ and 
 	\begin{equation}\label{eq:zwv}
 z:=\exp_{x}\bigg(\frac{1}{\lambda}(w-v)\bigg),
 \end{equation}
 with $w\in \partial h(x).$ 
 Suppose that $y\in {\cal M}$ is a solution to the problem
 \begin{equation}\label{aux:problem:line:search}
 \min_{x\in \cal{M}} \left\{g_1(x)+\frac{\lambda}{2}d^2(x,z)\right\},
 \end{equation}
    and assume that
	\begin{equation}\label{eq:lambda}
		\lambda\ge 2L,
	\end{equation}	
    where $L$ is the Lipschitz constant to $\grad g_2(\cdot)$.
	Then,
	\begin{equation}\label{eq:desce}
		f(x)-f(y)\ge \frac{\lambda}{4}d^2(y,x).
	\end{equation}	
\end{lemma}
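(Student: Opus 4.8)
The plan is to reproduce, almost verbatim, the descent estimate of Proposition~\ref{pro1} under the present notation and then sharpen the resulting constant using the extra hypothesis $\lambda\ge 2L$. Concretely, I would identify the data $(x,y,z,\lambda,v,w)$ of this lemma with the iterates $(x^k,x^{k+1},z^k,\lambda_k,v^k,w^k)$ there, so that the first goal becomes the inequality $f(x)-f(y)\ge \tfrac{\lambda-L}{2}\,d^2(y,x)$, which is exactly \eqref{eq:decr:f}--\eqref{eq:tri:conseq} rewritten without the iteration index.

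First I would assemble the three standard ingredients. Since $y$ solves \eqref{aux:problem:line:search}, comparing its objective value with the value at $x$ gives $g_1(y)+\tfrac{\lambda}{2}d^2(y,z)\le g_1(x)+\tfrac{\lambda}{2}d^2(x,z)$. The convexity of $h$ together with $w\in\partial h(x)$ yields $\langle w,\exp_x^{-1}y\rangle\le h(y)-h(x)$. Finally, Proposition~\ref{prop5} applied with $\zeta=g_2$, $p=x$ and the tangent vector $\exp_x^{-1}y$ produces $g_2(y)\le g_2(x)+\langle v,\exp_x^{-1}y\rangle+\tfrac{L}{2}d^2(y,x)$. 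Setting $p:=\lambda\exp_x^{-1}z$ and noting that \eqref{eq:zwv} gives $p=w-v$, I would add these three relations and use $f=g_1+g_2-h$ to reach an upper bound for $f(y)-f(x)$ of precisely the form \eqref{eq:decr:f}, featuring the cross term $-\langle p,\exp_x^{-1}y\rangle$ and the remainder $\tfrac{L}{2}d^2(y,x)$.

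Next I would dispose of the terms involving $z$ by the comparison theorem \eqref{eq:tri} applied to the geodesic triangle with vertices $x,y,z$ and apex $x$. This converts $\tfrac{\lambda}{2}d^2(z,x)-\tfrac{\lambda}{2}d^2(z,y)-\lambda\langle\exp_x^{-1}z,\exp_x^{-1}y\rangle$ into a bound $\le -\tfrac{\lambda}{2}d^2(x,y)$, exactly as in \eqref{eq:tri:conseq}. Substituting this into the previous bound makes all distances to $z$ cancel and leaves $f(y)\le f(x)-\tfrac{\lambda-L}{2}d^2(y,x)$.

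To finish, I would invoke the hypothesis $\lambda\ge 2L$, equivalently $L\le \lambda/2$, so that $\tfrac{\lambda-L}{2}\ge\tfrac{\lambda/2}{2}=\tfrac{\lambda}{4}$; this immediately upgrades the estimate to \eqref{eq:desce}. I expect no substantial obstacle beyond careful bookkeeping: the only points demanding attention are applying \eqref{eq:tri} to the correct triangle and using the identity $w-v=\lambda\exp_x^{-1}z$ from \eqref{eq:zwv} consistently, both of which are precisely the manipulations already carried out in the proof of Proposition~\ref{pro1}. The genuinely new content is merely the final constant comparison enabled by $\lambda\ge 2L$.
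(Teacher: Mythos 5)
Your proposal is correct and follows essentially the same route as the paper's own proof: the three ingredients (optimality of $y$ in \eqref{aux:problem:line:search}, the subgradient inequality for $h$, and Proposition~\ref{prop5} applied to $g_2$), combined via the identity $w-v=\lambda\exp_x^{-1}z$ and the comparison inequality \eqref{eq:tri}, yield $f(y)\le f(x)-\tfrac{\lambda-L}{2}d^2(y,x)$, after which $\lambda\ge 2L$ gives the constant $\tfrac{\lambda}{4}$. The paper's argument is precisely this adaptation of Proposition~\ref{pro1} with the final constant comparison, so there is nothing to add.
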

\begin{proof}
We first observe that the assumption that $h$ is convex implies that
	\begin{equation*}\label{eqq22}
		\langle w,\exp^{-1}_{x}y\rangle - h(y) \leq  - h(x),
	\end{equation*}
	with $w\in \partial h(x).$	 
Since $\grad g_2(\cdot)$ is $L$-Lipschitz continuous, by assumption, we can apply Proposition \ref{prop5} with  $g_2=\psi,$  $v=\exp^{-1}_{x}y$ and $p=x$, combined with the assumption that $v=\grad g_2(x)$, to conclude that
	\begin{equation*}\label{eqq2a2}
		g_2(y) \leq g_2(x)+ \langle v,\exp^{-1}_{x}y\rangle +\frac{L}{2}||\exp^{-1}_{x}y||^2.
	\end{equation*}
The two previous inequalities combined with the assumption that $y$ is a solution of Problem \eqref{aux:problem:line:search} yields
\begin{eqnarray*}
g_1(y)+g_2(y)-h(y)&\le& g_1(y)+g_2(x)+ \langle v-w,\exp^{-1}_{x}y\rangle\\ &+&\frac{L}{2}||\exp^{-1}_{x}y||^2-h(x)\\
&\le& g_1(x)+g_2(x)-h(x)+\frac{\lambda}{2}d^2(x,z)-\frac{\lambda}{2}d^2(y,z)\\
&-&\langle w-v,\exp^{-1}_{x}y\rangle +\frac{L}{2}||\exp^{-1}_{x}y||^2.
\end{eqnarray*}
Using now that $f(\cdot)=(g_1+g_2-h)(\cdot)$ and the fact that $\lambda \exp_{x}^{-1} z=w-v,$ in view of \eqref{eq:zwv}, the above inequality becomes to
\begin{eqnarray}\label{eq:qs:mono}
f(y)-f(x)  \leq 
\frac{\lambda}{2}\left(d^2(x,z)-d^2(y,z)\right)-\lambda\langle \exp_{x}^{-1} z,\exp^{-1}_{x}y\rangle+\frac{L}{2}d^2(y,x).
	\end{eqnarray}
On the other hand, the inequality  \eqref{eq:tri} implies that
	$$
	\frac{\lambda}{2}d^2(x,z) -\frac{\lambda}{2}d^2(z,y)-\lambda\langle \exp_{x}^{-1} z,\exp^{-1}_{x}y \rangle \leq -\frac{\lambda}{2}d^2(x,y).
	$$
	Thus, the previous inequality and \eqref{eq:qs:mono} imply in
	$$
	f(y)\leq f(x)-\bigg(\frac{\lambda-L}{2}\bigg)d^2(y,x). 
	$$
	Since \eqref{eq:lambda} holds, then we can conclude from the previous inequality that \eqref{eq:desce} holds,  which concludes the proof.
\end{proof}

Taking into account the preivous result, we now state an adaptive proximal-point algorithm to solve the Problem \eqref{dcg}.

\noindent

	\hrule
	\vspace{0.2cm}
	\noindent
	{\bf Adap-RPPM}
	\vspace{0.1cm}
	\hrule
\begin{itemize}
    \item [ ] {\bf Step~0.}
 Given an initial point $x^0 \in \dom f,$ $\lambda_0>0,$ $\epsilon>0$ and $k:=0$ 
\item [ ] {\bf Step~1.} Given $x^k \in \dom f,$ calculate 
\begin{equation*}\label{vwz}
	v^k:=\grad\,g_2(x^k)~ \mbox{and}~ w^k \in \partial h(x^k).
    \end{equation*}
\item[] {\bf Step~1.2} Calculate 
\[
z^k=\exp_{x^k}{\bigg(\frac{1}{\lambda_k}(w^k-v^k)\bigg)}. 
\]
\item [ ] {\bf Step~2.} Compute 
\begin{equation*}\label{yk}
	 x^{k+1}\in \min_{x\in \cal{M}} \left\{\tilde f_k(x):=g_1(x)+\frac{\lambda_k}{2}d^2(x,z^k)\right\}.
\end{equation*}

\item [ ] {\bf Step~3.} If $d(x^{k+1}, x^k)\le \epsilon$, stop declaring that $x^{k+1}$ is a solution of \eqref{dcg}. 
Otherwise, go to {\bf Step 4}.

\item [ ] {\bf Step~4.} If 
\begin{equation}\label{def:mon}
f(x^{k+1})-f(x^k)\le -\frac{\lambda_k}{4}d^2(x^{k+1},x^k),
\end{equation}	
then 
$\lambda_{k+1}\gets \lambda_k$, $k\gets k+1$, and go to {\bf Step 1}.
Otherwise, $\lambda_k\gets 2\lambda_k$, and go to {\bf Step 1.2}.
\end{itemize}

We now make some remarks about Adap-RPPM. 
Initially, Adap-RPPM requires the parameters $x^0$, $\lambda_0$, and $\epsilon$, which are provided by the user and can be freely chosen. 
According to \eqref{def:mon}, small values of $ \lambda_k$ tend to produce larger decreases, which, hopefully, makes the method converge more quickly. 
It is important to emphasize that, for its execution, Adap-RPPM does not require prior knowledge of the constant $L$ from the Lipschitz condition of the gradient of $g_2$, which makes the method advantageous in problems where this constant is unknown.

The well-defined Step $2$ of the Adap-RPPM is ensured, in particular, by item~a) of Proposition~\ref{prop:well:defined}.
In turn, the well-defined Step $4$ follows directly from Lemma~\ref{lem:precond}.
Based on these results, we conclude that Adap-RPPM is well-defined.

Next, we demonstrate that the sequence $\{\lambda_k\}$ generated by Adap-RPPM is bounded, a fundamental result to ensure the convergence of the method.

\begin{lemma}\label{eq:bound.sigmak0}
	Suppose that $\grad g_2$ is $L$-Lipschitz continuous on ${\cal M}.$ Then, the sequence $\{\lambda_k\}$ in Adap-RPPM satisfies
	\begin{equation}\label{eq:bound.sigmak}
		 \lambda_0 \leq \lambda_k\leq 4L+\lambda_0=:\lambda_{\max}.
	\end{equation}
    
\end{lemma}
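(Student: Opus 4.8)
The plan is to exploit the descent guarantee of Lemma~\ref{lem:precond} together with the bookkeeping of how $\lambda_k$ is updated in \textbf{Step~4} of Adap-RPPM. The crucial observation is that the acceptance test \eqref{def:mon}, namely $f(x^{k+1})-f(x^k)\le -\tfrac{\lambda_k}{4}d^2(x^{k+1},x^k)$, is precisely the conclusion \eqref{eq:desce} of Lemma~\ref{lem:precond} applied with $x=x^k$ and $y=x^{k+1}$. Consequently, by the hypothesis \eqref{eq:lambda} of that lemma, whenever the current value satisfies $\lambda_k\ge 2L$ the test \eqref{def:mon} is automatically fulfilled, so the algorithm accepts the step and does \emph{not} double.

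For the lower bound I would simply note that the update rule in \textbf{Step~4} either leaves $\lambda_k$ unchanged (when \eqref{def:mon} holds) or replaces it by $2\lambda_k$ (when \eqref{def:mon} fails). Hence $\{\lambda_k\}$ is nondecreasing, and since it is initialized at $\lambda_0$ it satisfies $\lambda_k\ge \lambda_0$ for every $k$. This part is immediate.

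For the upper bound I would argue by the contrapositive of the observation above: a doubling $\lambda_k\gets 2\lambda_k$ is performed only when \eqref{def:mon} fails, and by Lemma~\ref{lem:precond} this forces $\lambda_k<2L$ at the moment of doubling. Therefore any value produced by a doubling is strictly less than $2\cdot 2L=4L$. Since the only values $\lambda_k$ can assume are either the initial value $\lambda_0$ (if no doubling has yet occurred) or a value generated by a doubling (which lies below $4L$), every iterate obeys $\lambda_k\le\max\{\lambda_0,4L\}\le 4L+\lambda_0=\lambda_{\max}$. The same reasoning, applied inside the inner doubling loop of a fixed outer iteration, shows that finitely many doublings suffice to reach a value $\ge 2L$ when starting from $\lambda_0>0$, so the loop terminates and each $\lambda_k$ is finite and well-defined.

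I do not anticipate a serious obstacle here; the argument is essentially the standard termination analysis for a backtracking/doubling line search. The only point requiring care is the precise identification of the acceptance criterion \eqref{def:mon} with the conclusion \eqref{eq:desce} of Lemma~\ref{lem:precond}, so that the $2L$ threshold can be transferred to the doubling step, together with the mild bookkeeping needed to treat the accepted values and the intermediate doubled values on the same footing, so that the single clean bound $4L+\lambda_0$ covers every case (including $\lambda_0>4L$, where no doubling ever occurs).
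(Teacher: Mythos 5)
Your proposal is correct and follows essentially the same route as the paper's own proof: both rest on the observation from Lemma~\ref{lem:precond} that the acceptance test \eqref{def:mon} cannot fail once $\lambda_k \ge 2L$, so doubling only occurs below the threshold $2L$ and hence can never produce a value exceeding $4L$, giving $\lambda_k \le \max\{\lambda_0, 4L\} \le \lambda_0 + 4L$. Your invariant-style phrasing (every value is either $\lambda_0$ or the double of something below $2L$) is a slightly cleaner packaging of the paper's case analysis on whether $\lambda_0 \ge 2L$, but the underlying argument is identical.
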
	
\begin{proof} By Lemma \ref{lem:precond}, we have that the decrease in Step 4 is satisfied whenever $\lambda_k \geq 2L$. Clearly, \eqref{eq:bound.sigmak} is true if $\lambda_0 \geq 2L$, since in this case $\lambda_k$ does not need to be updated and will remain constant, equal to $\lambda_0$.

On the other hand, if $\lambda_0 < 2L$ and the parameter is updated so that there exists $m$ such that $\lambda_m \leq 2L < \lambda_{m+1}$, then we have that $2L < 2\lambda_m = \lambda_{m+1} \leq 4L$. Since the parameter is not updated when it is greater than or equal to $2L$, we conclude that, starting from $2L \leq \lambda_{m+1} \leq 4L$, the sequence of parameters remains constant and equal to $\lambda_{m+1}$.

Therefore, we can conclude that the sequence $\{\lambda_k\}$ is bounded from above by $4L + \lambda_0$.
\end{proof}

The following theorem establishes an iteration-complexity bound of $\mathcal{O}\left(\epsilon^{-2}\right)$ for the proposed method.
\begin{theorem}
	\label{thm:3.1}
	Given $\epsilon>0$, let $\left\{x^{k}\right\}_{k=1}^{T}$ be generated by the Adap-RPPM, such that
	\begin{equation*}
		d(x^{k+1}, x^k)>\epsilon,\quad k=0,\ldots,T.
		\label{eq:3.10}
	\end{equation*}
	Then,
	\begin{equation*}
		T<\frac{4(f(x^0)-f^{low})}{\lambda_0\epsilon^2}.
		\label{eq:3.11}
	\end{equation*}
\end{theorem}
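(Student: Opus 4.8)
The plan is to mirror the telescoping argument of Theorem~\ref{complex1}, but now drawing the per-iteration decrease directly from the acceptance test of Adap-RPPM rather than from Proposition~\ref{pro1}. The key structural observation is that the iteration counter in Step~4 advances only along the branch in which condition \eqref{def:mon} holds; the alternative branch merely doubles $\lambda_k$ and revisits Step~1.2 without producing a new iterate. Consequently, for every index $k$ that the complexity bound counts, the guaranteed descent
$$
f(x^{k+1})-f(x^k)\le -\frac{\lambda_k}{4}\,d^2(x^{k+1},x^k)
$$
is available by construction. Here Lemma~\ref{lem:precond} plays the certifying role: once the doubling has driven $\lambda_k\ge 2L$, inequality \eqref{eq:desce} guarantees that \eqref{def:mon} is satisfied, so the acceptance branch is eventually reached and the inner doublings terminate.

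From here the argument is short. First I would invoke the lower bound $\lambda_0\le\lambda_k$ furnished by Lemma~\ref{eq:bound.sigmak0} to replace the fluctuating coefficient $\lambda_k/4$ by the fixed $\lambda_0/4$, and then use the standing hypothesis $d(x^{k+1},x^k)>\epsilon$ to obtain the uniform decrease
$$
f(x^{k+1})-f(x^k)<-\tfrac{\lambda_0}{4}\,\epsilon^2,\qquad k=0,\ldots,T.
$$
Next I would sum these inequalities over $k$, let the left-hand side telescope to $f(x^{T+1})-f(x^0)$, and finally combine with Assumption~(A2), namely $f^{low}\le f(x^{T+1})$. Rearranging the resulting inequality to isolate the number of steps yields the stated bound $T<\tfrac{4(f(x^0)-f^{low})}{\lambda_0\epsilon^2}$.

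I do not expect a genuine difficulty in the computation itself: it is a routine accumulation of a uniform decrease against a lower bound on $f$, exactly parallel to Theorem~\ref{complex1} with the coefficient $(\lambda_0-L)/2$ replaced by $\lambda_0/4$. The only point deserving care is conceptual: one must count complexity in terms of accepted outer iterates, since only for those is \eqref{def:mon} in force. Note also that it is the \emph{lower} bound in Lemma~\ref{eq:bound.sigmak0} (and not the upper bound $\lambda_{\max}$) that is used here; its purpose is precisely to make the $\lambda_k$-coefficient uniformly replaceable by $\lambda_0$ throughout the telescoping sum, which is what produces an $\mathcal{O}(\epsilon^{-2})$ bound independent of the unknown Lipschitz constant $L$.
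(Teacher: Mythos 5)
Your proposal is correct and follows essentially the same route as the paper's proof: it uses the acceptance condition \eqref{def:mon} together with the lower bound $\lambda_0\le\lambda_k$ from Lemma~\ref{eq:bound.sigmak0} and the hypothesis $d(x^{k+1},x^k)>\epsilon$ to get a uniform per-iteration decrease of $\lambda_0\epsilon^2/4$, then telescopes against Assumption~(A2). Your additional remarks on counting only accepted outer iterates and on Lemma~\ref{lem:precond} terminating the inner doubling are accurate and consistent with the paper's reasoning, though the paper leaves them implicit.
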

\begin{proof}
We first note that by {\bf Step 4}, $d(x^{k+1}, x^k)>\epsilon$, for each $k=0,\ldots,T$. 
Combining  this fact, \eqref{def:mon} and \eqref{eq:bound.sigmak}, we have 
\begin{equation}\label{0.0}    
f(x^{k+1})\leq f(x^k)- \frac{\lambda_k}{4}\epsilon^2\leq f(x^k)- \frac{\lambda_0}{4}\epsilon^2, \,\,\forall k=0,1,\ldots, T.
\end{equation}
Then, summing up inequality  \eqref{0.0} from $k=0$ to $k=T$, and using Assumption (A2), we conclude that
$$f^{\text{low}}\leq f(x^{T+1})< f(x^0)- T\frac{\lambda_0}{4}\epsilon^2,
$$
this implies that 
$$T<\frac{4(f(x^0)-f^{\text{low}})}{\lambda_0\epsilon^2},
$$
which concludes the proof.
\end{proof}

\section{Under the Kurdyka-Lojasiewicz Property}\label{kl}

In the context of descent algorithms applied to convex functions, one typically ensures global convergence, meaning that the entire sequence generated by the algorithm converges.
However, when the objective functions lack convexity or quasiconvexity, the resulting sequences may display significant oscillations, and only partial convergence can generally be established. 
To address such challenges, the Kurdyka–Lojasiewicz (KL) property has emerged as a powerful analytical tool, especially in the study of asymptotic behavior of algorithms, such as the proximal point method.

Our main convergence result in this section is restricted to functions that satisfy the KL property, as stated below.
\begin{definition} \label{def:KL}
 A lower semicontinuous function $f:{\cal M}\to  \mathbb{R} \cup \{ +\infty\}$ satisfies the  KL property  at  $x^* \in \text{dom}\, \partial^L f$ if there exist $\nu > 0$, a neighborhood $\mathbb{V}$ of  $x^*$, and a continuous
concave function $\psi: [0, \nu[ \to [0, \infty[$ with
\begin{enumerate}
\item[(a)]$\psi(0)=0$;
\item[(b)] $\psi$ is continuously differentiable on $]0, \nu[$;
\item[(c)] $\psi^{\prime} >0$ on $]0, \nu[$;
\item[(d)] For every $x \in \mathbb{V}$ with $f(x^*)<f(x)< f(x^*)+ \nu$, we have
\begin{equation*}\label{KL}
\psi^{\prime}( f(x)- f(x^*))\, d(0, \partial^L f(x))\geq 1.
\end{equation*}
\end{enumerate}
\noindent
\end{definition}




Next, we present an auxiliary result regarding the sequence generated by R-PPM. It is important to prove the main result in this section.

\begin{lemma}\label{lem:auxiliar}
Let $\{x^k\}$, $\{z^k\}$, $\{v^k\}$, and $\{w^k\}$ be sequences generated by R-PPM. Suppose that $\lambda_k=\lambda>0$, for every $k$, 
and that the function $h$ is differentiable with a continuous Lipschitz gradient with the associated constant $\tilde M_1\ge 0$. 
If $\{x^k \}$ has a cluster point 
then, for $k$ sufficiently larger, there exists $\tilde M>0$ such that
\[
\|\exp_{x^{k}}^{-1} z^{k}- \exp_{x^{k}}^{-1} z^{k-1}\|\le \tilde M d(x^k,x^{k-1}).
\]
\end{lemma}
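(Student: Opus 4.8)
The plan is to reduce the claim to a Lipschitz estimate for the auxiliary map $G(x):=\exp_x\big[\tfrac1\lambda(\grad h(x)-\grad g_2(x))\big]$, for which $z^k=G(x^k)$ since $\lambda_k\equiv\lambda$ and, by differentiability of $h$, the element $w^k=\grad h(x^k)$ is single-valued.

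First I would remove the inverse exponentials from the left-hand side. Applying the comparison inequality \eqref{eq:tri} with $x=x^k$, $y=z^k$ and $z=z^{k-1}$, and writing $u:=\exp_{x^k}^{-1}z^k$ and $\bar u:=\exp_{x^k}^{-1}z^{k-1}$ (both in $T_{x^k}\mathcal M$, with $\|u\|=d(x^k,z^k)$ and $\|\bar u\|=d(x^k,z^{k-1})$), the inequality reads $\|u\|^2+\|\bar u\|^2-2\langle u,\bar u\rangle\le d^2(z^k,z^{k-1})$, that is $\|\exp_{x^k}^{-1}z^k-\exp_{x^k}^{-1}z^{k-1}\|\le d(z^k,z^{k-1})$. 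This step is unconditional on a Hadamard manifold, and it reduces the lemma to proving $d(z^k,z^{k-1})\le\tilde M\,d(x^k,x^{k-1})$, i.e.\ that $G$ is Lipschitz along the iterates.

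Next I would localize. The cluster-point hypothesis, combined with $d(x^{k+1},x^k)\to0$ from Proposition~\ref{pro2}(b), confines a tail of $\{x^k\}$ to a bounded region $\Omega$ around the cluster point $x^*$; on $\Omega$ the Lipschitz (hence bounded) fields $\grad h$ and $\grad g_2$ keep $\tfrac1\lambda\|w^k-v^k\|$ uniformly bounded, so the points $z^k$ also remain in a bounded set. On such a set the map $G$ is a composition of the Lipschitz vector field $x\mapsto\tfrac1\lambda(\grad h(x)-\grad g_2(x))$ with the smooth exponential map $(x,v)\mapsto\exp_x v$, and is therefore Lipschitz. Concretely, decomposing $d(z^k,z^{k-1})$ through the intermediate point $\exp_{x^{k-1}}\big(\Gamma_{x^{k-1},x^k}\tfrac1\lambda(w^k-v^k)\big)$ splits the estimate into a velocity-change term, bounded by $\tfrac1\lambda\|\Gamma_{x^{k-1},x^k}(w^k-v^k)-(w^{k-1}-v^{k-1})\|\le\tfrac{\tilde M_1+L}{\lambda}d(x^k,x^{k-1})$ via the Lipschitz gradients, and a base-point-change term controlled by the local Lipschitz constant of $\exp$ on $\Omega$; summing yields the constant $\tilde M$.

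The routine part is the velocity-change term, which is immediate from the $\tilde M_1$- and $L$-Lipschitz continuity of $\grad h$ and $\grad g_2$. The main obstacle is the base-point-change term together with the uniformity of $\tilde M$: on a general Hadamard manifold the differential of $\exp$ (equivalently, Jacobi fields) may grow with both the distance and the magnitude of the (possibly unbounded below) curvature, so a finite Lipschitz constant is available only after restricting to the bounded region $\Omega$. Making precise that a tail of the sequence really stays in such a region---since a cluster point by itself need not bound the whole sequence---and extracting a single $k$-independent constant $\tilde M$ are the delicate points of the argument.
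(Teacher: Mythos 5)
Your opening reduction is valid and is a genuinely different first move from the paper's: applying \eqref{eq:tri} with $x=x^k$, $y=z^k$, $z=z^{k-1}$ does give $\|\exp_{x^k}^{-1}z^k-\exp_{x^k}^{-1}z^{k-1}\|\le d(z^k,z^{k-1})$ on a Hadamard manifold, and your subsequent parallel-transport decomposition with the $(\tilde M_1+L)/\lambda$ bound mirrors the paper's computation. The paper, however, never passes to the manifold distance $d(z^k,z^{k-1})$ at all: it stays in $T_{x^k}{\cal M}$, using the identity $\exp_{x^k}^{-1}z^k=\frac{1}{\lambda}(w^k-v^k)$ and inserting the transported vector $\Gamma_{x^k,x^{k-1}}\exp_{x^{k-1}}^{-1}z^{k-1}$, so that the velocity term is a pure tangent-space estimate needing only the Lipschitz gradients, and the single geometric ingredient is the base-point term $\|\Gamma_{x^k,x^{k-1}}\exp_{x^{k-1}}^{-1}z^{k-1}-\exp_{x^k}^{-1}z^{k-1}\|\le \tilde M_2\, d(x^k,x^{k-1})$.

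The gap in your route is concrete: your velocity-change bound uses the exponential map in the wrong direction. You bound $d\bigl(\exp_{x^{k-1}}(\Gamma_{x^{k-1},x^k}\tfrac{1}{\lambda}(w^k-v^k)),\,\exp_{x^{k-1}}(\tfrac{1}{\lambda}(w^{k-1}-v^{k-1}))\bigr)$ by the norm of the difference of the two velocities, i.e.\ you treat $\exp_{x^{k-1}}$ as nonexpansive. On a Hadamard manifold the opposite inequality holds: by Jacobi-field (Rauch) comparison in nonpositive curvature, $d(\exp_p u,\exp_p\bar u)\ge\|u-\bar u\|$, with equality only in the flat case. Indeed, your own Step 1 is exactly the statement that $\exp_p^{-1}$ is nonexpansive, i.e.\ that $\exp_p$ is \emph{expansive}; the two steps contradict each other outside Euclidean space. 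Repairing this requires an upper bound on $\|d\exp_p\|$ over a bounded set of base points and velocities (a curvature-lower-bound/Jacobi-field estimate), which is precisely the ``local Lipschitz constant of $\exp$'' machinery you reserve only for the base-point term --- so your ``routine'' term is not routine, and your argument needs the localization in two places. Moreover, the localization itself is asserted on false grounds: a cluster point plus $d(x^{k+1},x^k)\to 0$ (Proposition~\ref{pro2}(b)) does not confine a tail of the sequence to a bounded set, since the iterates can make ever-longer excursions with ever-smaller steps; you flag this as delicate but your estimates depend on it. In fairness, the paper's own proof is loose on the analogous point (it extracts $\tilde M_2$ from the mere existence of cluster points), but its tangent-space formulation at least keeps the Lipschitz-gradient part of the estimate unconditional, whereas in your version even that part inherits the unproved geometric bound.
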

\begin{proof} We first observe that if $\{x^k\}$, $\{z^k\}$, $\{v^k\}$, and $\{w^k\}$ are sequences generated by R-PPM, and $\lambda_k=\lambda>0$, for every $k$, then it follows from \eqref{wk} that
\[
\exp_{x^{k}}^{-1} z^{k}=\frac{1}{\lambda}(w^k-v^k).
\]
Moreover, using the fact that $g_2$, $h$ and $\exp$ are continuous, and the assumption that $\{x^k \}$ has a cluster point, we can conclude that $\{z^k\}$ has a cluster point, and hence that there exists $\tilde M_2\ge 0$ such that, for $k$ sufficiently larger, there holds
\begin{equation*}
\left\|\Gamma_{x^k,x^{k-1}}\exp_{x^{k-1}}^{-1} z^{k-1}-\exp_{x^{k}}^{-1} z^{k-1}\right\|\le \tilde M_2  d(x^k,x^{k-1}).  
\end{equation*}
Using these facts and simple algebraic manipulations we obtain that
\begin{align*}
 &\|\exp_{x^{k}}^{-1} z^{k}- \exp_{x^{k}}^{-1} z^{k-1}\| =\left\|\frac{1}{\lambda}(w^k-v^k)-\exp_{x^{k}}^{-1} z^{k-1}\right\|\\
&\le \left\|\frac{1}{\lambda}(w^k-v^k)-\Gamma_{x^k,x^{k-1}}\exp_{x^{k-1}}^{-1} z^{k-1}\right\|\\
&+\left\|\Gamma_{x^k,x^{k-1}}\exp_{x^{k-1}}^{-1} z^{k-1}-\exp_{x^{k}}^{-1} z^{k-1}\right\|\\
&=\left\|\frac{1}{\lambda}(w^k-v^k)-\frac{1}{\lambda}\Gamma_{x^k,x^{k-1}}(w^{k-1}-v^{k-1})\right\|\\
&+\left\|\Gamma_{x^k,x^{k-1}}\exp_{x^{k-1}}^{-1} z^{k-1}-\exp_{x^{k}}^{-1} z^{k-1}\right\|\\
&\le \frac{1}{\lambda}\left\|w^k-\Gamma_{x^k,x^{k-1}}w^{k-1}\right\|+\frac{1}{\lambda}\left\|v^k-\Gamma_{x^k,x^{k-1}}v^{k-1}\right\|+\tilde M_2d(x^k,x^{k-1})\\
&\le \left(\frac{\tilde M_1+L}{\lambda}+\tilde M_2\right)d(x^k,x^{k-1})=\tilde M d(x^k,x^{k-1}), 
\end{align*} 
which concludes the proof. 
\end{proof}


In the following theorem, we establish sufficient conditions that guarantee the convergence of the
sequence $\{x^k \}$ generated by the R-PPM. 
For this, we will consider that $f$ satisfies the KL property and that $\grad h$ is $\tilde M_1$-Lipschitz continuous.

\begin{theorem}\label{TeoKL} Let $\{x^k\}$ be a sequence generated by R-PPM, and assume that $\lambda_k=\lambda>0$, for every $k$. 
Suppose that  
$\grad h$ is $\tilde M_1$-Lipschitz continuous and that $f$ has the KL property at any point $x \in \dom\,f$. 
If the sequence $\{x^k \}$ has a cluster point $x^*$, then the whole sequence converges to $x^*$, which is a critical point of $f$.
\end{theorem}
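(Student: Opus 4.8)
The plan is to carry out the standard Kurdyka--Lojasiewicz finite-length argument, adapted to the Hadamard setting. Two ingredients drive it: the sufficient-decrease inequality $f(x^k)-f(x^{k+1})\ge\frac{\lambda-L}{2}d^2(x^{k+1},x^k)$ already furnished by Proposition~\ref{pro1}, and a relative-error (subgradient) bound of the form $d(0,\partial^L f(x^{k+1}))\le c\,d(x^{k+1},x^k)$, which I would extract from the optimality conditions together with Lemma~\ref{lem:auxiliar}. Once both are in place, the KL inequality at the cluster point converts them into summability of the step lengths, forcing the whole sequence to be Cauchy.

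First I would establish the subgradient bound. The optimality condition \eqref{eq:cons:opt} gives $u^{k+1}:=\lambda\exp_{x^{k+1}}^{-1}z^k\in\partial^L g_1(x^{k+1})$. Since $h$ is now $C^1$ (its gradient being $\tilde M_1$-Lipschitz) and $g_2$ is $C^1$, the function $h-g_2$ is $C^1$, so applying the calculus rule (c) to $g_1=f+(h-g_2)$ yields $\eta^{k+1}:=u^{k+1}+\grad g_2(x^{k+1})-\grad h(x^{k+1})\in\partial^L f(x^{k+1})$. Using the step-$(k+1)$ identity $\lambda\exp_{x^{k+1}}^{-1}z^{k+1}=\grad h(x^{k+1})-\grad g_2(x^{k+1})$ coming from \eqref{wk}, this collapses to
\[
\eta^{k+1}=\lambda\big(\exp_{x^{k+1}}^{-1}z^k-\exp_{x^{k+1}}^{-1}z^{k+1}\big),
\]
and Lemma~\ref{lem:auxiliar} (with the index shifted by one) gives $d(0,\partial^L f(x^{k+1}))\le\|\eta^{k+1}\|\le\lambda\tilde M\,d(x^{k+1},x^k)$.

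Next I would identify the limiting value. By Proposition~\ref{pro2}(a), $\{f(x^k)\}$ decreases to some $f^*$, and arguing exactly as in the proof of Theorem~\ref{mai:result1} (lower semicontinuity of $g_1$, continuity of $g_2$ and $h$) along the subsequence $x^{k_j}\to x^*$ gives $f(x^*)=f^*$; the trivial case $f(x^k)=f^*$ produces finite termination and may be discarded, so $f(x^k)>f^*$ for all $k$. Writing $\Delta_k:=\psi(f(x^k)-f^*)$ and combining the concavity of $\psi$, the KL inequality at $x^k$, the subgradient bound, and sufficient decrease yields
\[
\Delta_k-\Delta_{k+1}\ \ge\ \psi'(f(x^k)-f^*)\big(f(x^k)-f(x^{k+1})\big)\ \ge\ \frac{\lambda-L}{2\lambda\tilde M}\,\frac{d^2(x^{k+1},x^k)}{d(x^k,x^{k-1})}.
\]
Applying $2\sqrt{uv}\le u+v$ turns this into $2d(x^{k+1},x^k)\le\frac{2\lambda\tilde M}{\lambda-L}(\Delta_k-\Delta_{k+1})+d(x^k,x^{k-1})$, and telescoping gives $\sum_k d(x^{k+1},x^k)<\infty$. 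Thus $\{x^k\}$ has finite length; since ${\cal M}$ is complete it is Cauchy and converges, and because $x^*$ is a cluster point the limit must be $x^*$. Finally, Theorem~\ref{mai:result1} identifies $x^*$ as a critical point of $f$.

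The main obstacle, as always in KL arguments, is that the displayed chain is legitimate only while $x^k$ lies in the KL neighborhood $\mathbb{V}$ of $x^*$ and satisfies $f^*<f(x^k)<f^*+\nu$. I would resolve this by a bootstrapping induction: pick an index $k_0$ along the convergent subsequence at which $x^{k_0}$ is close to $x^*$ and $\Delta_{k_0}$ is small, then use $d(x^k,x^*)\le d(x^{k_0},x^*)+\sum_{i=k_0}^{k-1}d(x^{i+1},x^i)$ together with the partial telescoped bound to show inductively that every subsequent iterate remains in $\mathbb{V}$, so that the estimate is self-sustaining and the finite-length conclusion holds along the entire tail.
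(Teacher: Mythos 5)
Your proposal is correct and follows essentially the same route as the paper's own proof: the same subgradient bound $d\bigl(0,\partial^L f(x^{k+1})\bigr)\le \lambda\tilde M\,d(x^{k+1},x^k)$ obtained from the subproblem optimality condition plus Lemma~\ref{lem:auxiliar}, the same concavity/KL chain combined with the sufficient decrease of Proposition~\ref{pro1}, and the same neighborhood-bootstrapping induction yielding finite length and the Cauchy property (the paper uses $a^2/b\ge a-b/4$ where you use $2\sqrt{uv}\le u+v$; the resulting constants $1/4$ versus $1/2$ are immaterial). Your derivation of the inclusion $\eta^{k+1}\in\partial^L f(x^{k+1})$ by applying the sum rule to $g_1=f+(h-g_2)$ is, if anything, slightly more careful than the equality asserted in \eqref{eq:limiting} of the paper, but it is the same estimate and the same overall argument.
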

\begin{proof} Since $\{x^k \}$ has a cluster point $x^*$, by assumption, we can select  a subsequence $\{x^{k_j}\}$ that converges to $x^*$. By Proposition \ref{pro2}~$(a)$, without loss of generality, we can assume that $f(x^{k})> f(x^{*})$ for all $k$, otherwise, there exists $k$ such that $x^{k+1}=x^k$, i.e, the method stop after a finite number of steps. We have
\begin{align}\label{eq:limiting}
\partial^L f(x^{k}) = \partial^L g_1(x^{k})+ v^{k}-w^{k}=\partial^L g_1(x^{k}) -\lambda \exp_{x^{k}}^{-1} z^{k}
\end{align}
where $v^{k}=\grad~g_2(x^{k}),$ $w^{k}=\grad h(x^{k})$ and $w^k-v^k=\lambda \exp_{x^{k}}^{-1} z^{k},$ in view of \eqref{wk}.
Now, it follows from \eqref{xk} that
\begin{align}\label{eq:first:order:stationay}
0 &\in \partial^L g_1(x^{k})-\lambda \exp_{x^{k}}^{-1} z^{k-1}, \forall k\ge 1.
\end{align}
Using now Lemma \ref{lem:auxiliar}, \eqref{eq:limiting} and \eqref{eq:first:order:stationay}, we obtain that
\begin{align}\label{dis}
d(0,\partial^L f(x^{k}))\leq \lambda\| \exp_{x^{k}}^{-1} z^{k-1}- \exp_{x^{k}}^{-1} z^{k}\|
\leq \lambda \tilde M d(x^k,x^{k-1}).
\end{align}

Let $\delta >0$  be such that $\mathbb{B}(x^*;\delta) \subset \mathbb{V}$, where $\mathbb{V}$ is as in Definition~\ref{def:KL}. As  $x^{k_j} \to x^*, 
\|x^{k+1}-x^k\| \to 0, f(x^{k}) \to f(x^*)$, and $f(x^{k})> f(x^*)$ for all $k$, we can find a natural number $p$, large enough, satisfying
\begin{equation*}\label{est}
x^p \in \mathbb{B}(x^*;\delta),~ f(x^*)<f(x^p)< f(x^*)+ \nu
\end{equation*}
and
\begin{equation}\label{esti}
d(x^p,x^*)+ \frac{d(x^p,x^{p-1})}{4}+\gamma \psi\bigg(f(x^{p})- f(x^*)\bigg)< \frac{3\delta}{4}
\end{equation}
where $\gamma:=2\lambda\tilde M/(\lambda-L)$.

We now claim that for all $k\ge p$, $x^k \in \mathbb{B}(x^*,\delta)$.
To show the claim, we first show that whenever  $x^k \in \mathbb{B}(x^*,\delta)$ and $f(x^*)<f(x^k)< f(x^*)+ \nu$ for some $k$, we have 
\begin{equation}\label{ineq5}
 d(x^{k+1},x^{k})\leq  \frac{d(x^{k},x^{k-1})}{4} +\gamma \bigg[\psi\bigg(f(x^{k})- f(x^*)\bigg)-\psi\bigg(f(x^{k+1})- f(x^*)\bigg) \bigg]. 
\end{equation}
Indeed, due to the property that $\{f({{x}}^k)\}$ is monotonically decreasing (see Proposition~\ref{pro1}) and the assumption that $\psi^{\prime} >0$ on $]0, \nu[$ (see Definition~\ref{def:KL}(c)), we conclude from \eqref{dis} that
\begin{align*}
\lambda\tilde M d(x^{k},x^{k-1})\bigg[\psi\bigg(f(x^{k})- f(x^*)\bigg)-\psi\bigg(f(x^{k+1})- f(x^*)\bigg) \bigg]  \\
\geq d(0,\partial^L f(x^{k})) \bigg[\psi\bigg(f(x^{k})- f(x^*)\bigg)-\psi\bigg(f(x^{k+1})- f(x^*)\bigg) \bigg].
\end{align*}
As a consequence, we use the concavity of $\psi$, and combine the previous inequality with \eqref{KL}, and \eqref{lem:mon:decre} to conclude that
\begin{eqnarray*}
&&\lambda\tilde Md(x^{k},x^{k-1})\bigg[\psi\bigg(f(x^{k})- f(x^*)\bigg)-\psi\bigg(f(x^{k+1})- f(x^*)\bigg) \bigg]  \\
 &\geq& d(0,\partial^L f(x^{k})) \psi^{\prime}\bigg(f(x^{k})- f(x^*)\bigg)\bigg(f(x^k)-f(x^{k+1})\bigg)\\
 &\geq& \frac{\lambda-L}{2} d^2(x^{k+1},x^{k}).
\end{eqnarray*}
Hence,
\begin{align*}
\gamma \bigg[\psi\bigg(f(x^{k})- f(x^*)\bigg)-\psi\bigg(f(x^{k+1})- f(x^*)\bigg) \bigg]  \geq \frac{d^2(x^{k+1},x^{k})}{d(x^{k},x^{k-1})}\\
\geq d(x^{k+1},x^{k})- \frac{d(x^{k},x^{k-1})}{4},
\end{align*}
where the last inequality holds since $a^2/b\geq a -b/4$, for any positive real numbers $a$ and $b$. As a consequence, \eqref{ineq5} holds.

We now prove the claim by induction on $k$. 
The claim is true for $k=p$ due to \eqref{esti}. We suppose that it also holds for $k = p,p+1,\cdots ,p+n-1$, with $n\geq 1$, i.e., $x^p, x^{p+1},\cdots, x^{p+n-1}  \in \mathbb{B}(x^*;\delta)$. Since $f(x^*) < f(x^{k+1})\leq f(x^k)$, our choice of $p$ implies that $f(x^*) < f(x^k)< f(x^*)+\nu$  for all $k \geq  p$. In particular, \eqref{ineq5} can be applied for all $k = p,p+1,\cdots ,p + n-1$. Thus, summing (\ref{ineq5}) from $k=p$ to $k=p+n-1$, doing some algebraic manipulations and using the fact that $\psi\geq 0$, we obtain 
\begin{equation}\label{ineq6}
\sum_{j=1}^nd(x^{p+j},x^{p+j-1})\leq \frac{4}{3}\bigg[\frac{d(x^{p},x^{p-1})}{4}+\gamma \bigg[\psi\bigg(f(x^{p})- f(x^*)\bigg)  \bigg].
\end{equation} 
As a consequence, we have
\begin{align*}
d(x^{p+n},x^{*}) &\leq d(x^{p},x^{*})+ \sum_{j=1}^nd(x^{p+j},x^{p+j-1})\\
&\leq \frac{4}{3}\bigg[d(x^{p},x^{*})+\frac{d(x^{p},x^{p-1})}{4}+\gamma \bigg[\psi\bigg(f(x^{p})- f(x^*)\bigg)  \bigg]<\delta,
\end{align*}
where the last two inequalities follows from \eqref{ineq6} and \eqref{esti}, respectively. Thus, $x^k \in \mathbb{B}(x^*;\delta)$ and $f(x^*) < f(x^k)< f(x^*)+\nu$  for all $k \geq  p$. Using  \eqref{ineq6} we conclude that  $\sum_{k=1}^{+\infty} d(x^{k+1},x^{k})<+ \infty$. Therefore, $\{x^k\}$ is a Cauchy sequence and, hence, is convergent.
\end{proof}



\section{Application and Numerical Experiments}\label{secexp}
This section contains two subsections. The first presents a practical example of \eqref{dcg}.
The second is dedicated to numerical experiments.

\subsection{A Practical Example of \eqref{dcg}}
In this subsection, we show how the log-determinant optimization problem with constraints can be rewritten as in \eqref{dcg}.

When $g_1$ is the indicator function of a convex set $\Omega\subset {\cal M}$, \eqref{dcg} is equivalent to solving the following problem 
\begin{equation}\label{dcgapp}
	\min_{x \in \Omega}\{g_2(x)-h(x)\}.
\end{equation}
In fact, \eqref{dcgapp} is equivalent to solving the following problem:
\begin{equation}\label{dcgapp1}
	\min_{x \in {\cal M}}\{g_1(x)+g_2(x)-h(x)\}
\end{equation}
where $ g_1(x) = \mathcal{I}_\Omega(x)$, with $ \mathcal{I}_\Omega $ being the indicator function defined as follows:
$$
{\cal I}_\Omega(x) = 
\begin{cases}
0, & \text{if } x \in \Omega \\
+\infty, & \text{if } x \notin \Omega
\end{cases}.
$$
We now consider the following particular case of \eqref{dcgapp}
    \begin{equation} \label{ld1}
\min_{X \in \Omega} \left\{ -\log \det(X + \Sigma_1) + \lambda \log \det(X + \Sigma_2)\right\}
\end{equation}
where the variable is $X \in \mathbb{S}^n$, and the coefficients are 
$\Sigma_1, \Sigma_2 \in \mathbb{S}^n_{++}$, $C \in \mathbb{S}^n_{+}$,  $\lambda > 1$ and $\Omega=\{ X \in \mathbb{S}^n \mid 0 \preceq X \preceq C \}$.
This problem originates from a broad range of applications, particularly in network information theory and wireless communication; see \cite{liu2007extremal,weingarten2006capacity}.

Hence, if we set $h(X): = \log \det(X + \Sigma_1)$, $g_2(X) := \lambda \log \det(X + \Sigma_2)$, and $g_1(X) := {\cal I}_\Omega(X)$, the problem in \eqref{ld1} is equivalent to the problem in \eqref{dcgapp1}, since $h$ is convex over the bounded convex set $\Omega$ and $g_2$ has continuously Lipschitz gradients in the interior of $\Omega$. 
Specifically, \eqref{ld1} is equivalent to
\[
\min_{X \in \mathbb{S}^n} \left\{{\cal I}_\Omega(X)  + \lambda \log \det(X + \Sigma_2)-\log \det(X + \Sigma_1)\right\}.
\]


\subsection{Numerical Experiments}
In this subsection, we evaluate the numerical performance of the proposed Adap-RPPM method. 
We consider three illustrative minimization problems to assess the algorithm’s behavior in different scenarios. 
All experiments were conducted on a workstation equipped with a Xeon W3-2435 processor and 64 GB of RAM. 
The algorithms were implemented in Julia (version 1.11.5), using the Manopt toolbox \cite{Bergmann2022} to solve the subproblem in \eqref{yk} via the \texttt{trust\_regions} solver. 
The stopping criterion was set to $d(X^{k+1},X^k) < 10^{-8}$, with a maximum of 100 iterations. 
The complete source code used in the numerical experiments is freely available at \url{https://github.com/mbortoloti/AdapRPPM}. 

\subsubsection{Sensitivity to the Parameter $\lambda_0$}

We first consider the minimization of
\begin{equation*}
    f_1(X) = \alpha\,\mathrm{tr}(X) + \mathrm{tr}(X^{-1}A) + \log(\det(X)) - n  - \mathrm{tr}(BX),
\end{equation*}
on $\mathbb{P}^n_{++}$, where $0 < \alpha < 1$, 
and $A,B$ are symmetric positive definite matrices. 
Here, $\mathrm{tr}(\cdot)$ denotes the trace of a matrix. 
In the DC decomposition we identify 
\[
g_1(X) = \alpha\, \mathrm{tr}(X), \quad g_2(X) = \mathrm{tr}(X^{-1}A)+\log(\det(X))-n, \quad h(X)=\mathrm{tr}(BX).
\]
The Riemannian gradient of $g_2$ is Lipschitz continuous with constant $L=1.0$. 

For the experiments, we set $A = \mathrm{Diag}(1,\ldots,n)$ and $B = \mu A$ ($\mu > 0$). 
The unique critical point of $f_1$ is 
\[
X^* = \mathrm{Diag}(x_1^*, \ldots, x_n^*), \quad 
x_i^* = \frac{-1 + \sqrt{1+4(\alpha - \mu i)i}}{2(\alpha - \mu i)}, \quad i=1,\ldots,n.
\]
We initialized the algorithm with $X^0 = \log(n) I$, where $I$ is the identity matrix. 
The tests were carried out with $\alpha = 0.5$, $\mu = 2.0$, and $n = 10$, corresponding to $\dim \mathbb{P}^n_{++} = 55$. 
The method was executed until $d(X^k, X^{k+1}) < 10^{-8}$, with initial parameters $\lambda_0 \in \{10^{-4}, 10^{-3}, 10^{-2}, 10^{-1}, 1.0\}$.

Figure~\ref{example_01_COST} shows the evolution of $|f(X^k) - f(X^*)|$ as a function of the iteration count. 
The results confirm that $\lambda_0$ strongly influences the convergence speed. 
For $\lambda_0 = 1.0$, convergence is reached in 42 iterations, while for $\lambda_0 = 10^{-4}$ the method requires 56 iterations. 
The slowest case occurs with $\lambda_0 = 10^{-2}$, where 93 iterations are needed. 
This behavior can be explained by the role of $\lambda_0$ in controlling the step size: very small values make the initial steps overly conservative, while $\lambda_0 = 1.0$ yields more aggressive yet stable steps. 
Note that the sequence $\{\lambda_k\}$ remains constant throughout the iterations. 

\begin{figure} 
    \centering
    \includegraphics[scale=0.4]{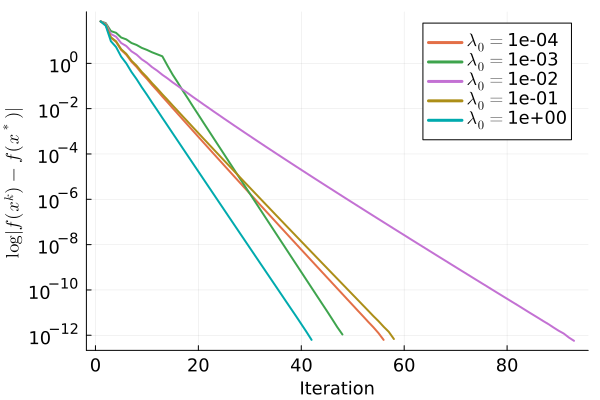}
    \caption{Comparison of Adap-RPPM and DCPPA in terms of CPU run time for the $f_1$ problem.}
\label{example_01_COST} 
\end{figure}

\begin{table}[h] 
\caption{Values of $\lambda_0$, $\lambda_k$, and iteration count for $f_1$.}
\centering 
\begin{tabular}{|c|ccccc|}
        \hline
         $\lambda_0$  & $10^{-4}$ & $10^{-3}$ & $10^{-2}$  & $10^{-1}$  &  $1.0$\\
         \hline
         $\lambda_k$  & 0.8192   & 1.024     & 0.64       &  0.8       &   1.0 \\
         \hline
         Iterations $k$ & 56       &   48      &  93        &   58       &  42 \\
        \hline
    \end{tabular} \label{tab1}
\end{table}

\subsubsection{Scalability with Manifold Dimension}

We next analyze Adap-RPPM for the function $f \colon \mathbb{P}^n_{++} \to \mathbb{R}$ defined by
\[
f(X) = \tfrac{1}{12} \bigl( \log \det X \bigr)^4 + \bigl( \log \det X \bigr)^3 - \log \det X.
\]
Its critical points satisfy $\det X^* \approx 1.756$. 

The experiments were conducted with matrix sizes $n = 10,15,\ldots,100$, corresponding to manifold dimensions ranging from 55 to 5050. 
The initialization was set to $X^0 = \log(n) I$ for each $n$, with $\lambda_0 = 10^{-4}$. 
Figure~\ref{example_02_dim_time} illustrates the computational cost, revealing a remarkably weak dependence of CPU time on the manifold dimension. 
The observed growth rate, benchmarked against a reference slope of $0.07\%$, highlights the scalability of the proposed method in high-dimensional settings. 

\begin{figure} 
    \centering
    \includegraphics[scale=0.4]{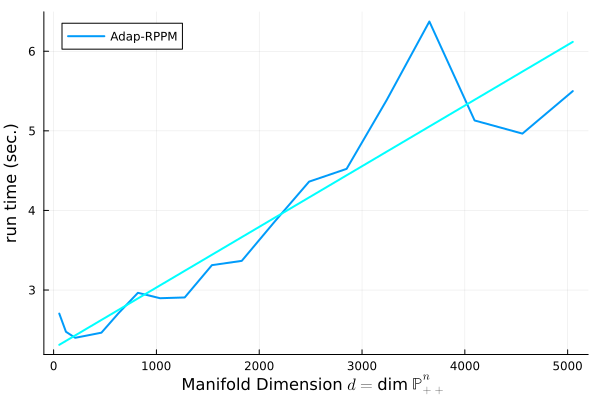}
    \caption{Comparison of Adap-RPPM and DCPPA in terms of CPU run time for the $f_2$ problem.}
\label{example_02_dim_time} 
\end{figure}

\subsection{Comparison with Baseline Methods}

Finally, we consider the minimization of
\[
f_3(X) = \bigl(\log(\det X)\bigr)^4 - \bigl(\log(\det X)\bigr)^2,
\]
defined on $\mathbb{P}^n_{++}$. 
In this case, the decomposition is $f_3 = g_1 - h$ with $g_1(X) = (\log\det X)^4$ and $h(X) = (\log\det X)^2$ (so $g_2 = 0$). 
The critical points are all $X^* \in \mathbb{P}^n_{++}$ satisfying $\det X^* = 1$ or $\det X^* = \exp(\pm\sqrt{2}/2)$. 

We compared Adap-RPPM with two well-established algorithms: the Difference of Convex Algorithm (DCA) and the Difference of Convex Proximal Point Algorithm (DCPPA). 
Both methods were implemented in Manopt.jl \cite{Bergmann2022}, following their standard configurations. 
The benchmarking comprised 160 test cases across multiple dimensions, with 10 random initializations per dimension generated using \texttt{rand(M)} from the \texttt{Manifolds.jl} package \cite{Bergmann2022}. 

Figure~\ref{example_03_PP} presents the performance profiles of the three methods. 
The results indicate that Adap-RPPM consistently outperforms both DCA and DCPPA in terms of computational efficiency across all tested scenarios. 

\begin{figure} 
    \centering
    \includegraphics[scale=0.4]{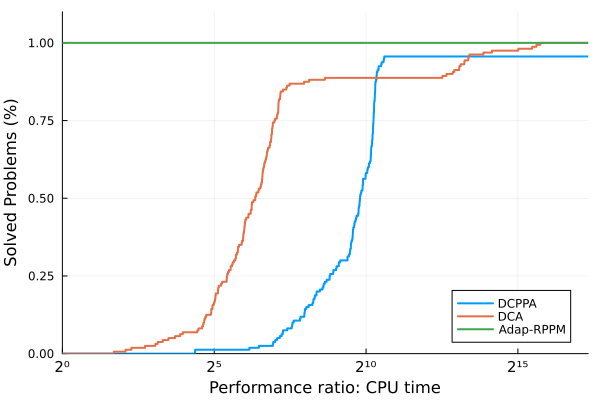}
    \caption{Performance comparison of Adap-RPPM, DCA, and DCPPA for the $f_3$ problem.}
\label{example_03_PP} 
\end{figure}

\section{Conclusions}\label{Conclusions}

In this paper, we study variants of a generalized proximal point algorithm for minimizing compositions of functions combining convex, differentiable, and lower semicontinuous terms on Riemannian manifolds. We propose two approaches: the first based on knowledge of the Lipschitz constant of the gradient of the smooth part, and the second independent of this parameter, which expands its applicability in contexts where such information is unavailable.

Our theoretical results show that both variants are convergent and admit well-established complexity bounds, in line with recent literature. Numerical experiments demonstrate the practical effectiveness of the methods in problems with complex structures, reinforcing their relevance in the study of optimization on Riemannian manifolds.

In addition to contributing to advances in proximal methods on manifolds, this work suggests promising avenues for future research. Among these are the development of accelerated versions, analysis under more general assumptions, such as Hölder-type conditions, and the investigation of applications in other contexts.

\section*{Acknowledgements}
This work was partially supported by the following grants from CNPq (National Council for Scientific and Technological Development) 401864/2022-7 .



\bibliographystyle{plain}
\bibliography{sn-bibliography}

@preamble{ "\newcommand{\nosort}[1]{}" }

@article{weingarten2006capacity,
  title={The capacity region of the {G}aussian multiple-input multiple-output broadcast channel},
  author={Weingarten, Hanan and Steinberg, Yossef and Shamai, Shlomo Shitz},
  journal={IEEE Transactions on Information Theory},
  volume={52},
  number={9},
  pages={3936--3964},
  year={2006},
  publisher={IEEE}
}

@article{liu2007extremal,
  title={An extremal inequality motivated by multiterminal information-theoretic problems},
  author={Liu, Tie and Viswanath, Pramod},
  journal={IEEE Transactions on Information Theory},
  volume={53},
  number={5},
  pages={1839--1851},
  year={2007},
  publisher={IEEE}
}

@article{Almeida2020,
  author = {Almeida, Y. T. and da Cruz Neto, J. X. and Oliveira, P. R. and Souza, J.  C.  O},
  title = {A modified proximal point method for {DC} functions on {H}adamard manifolds},
  journal = {Comput. Optim. Appl.},
  year = {2020},
  volume = {76},
  pages = {649--673},
  doi = {10.1007/s10589-020-00173-3}
}

@article{Andrade2023,
  author = {Andrade, J. S. and Lopes, J. O. and Souza, J. C. O.},
  title = {An Inertial Proximal Point Method for Difference of Maximal Monotone Vector Fields in {H}adamard Manifolds},
  journal = {Journal of Global Optimization},
  year = {2023},
  volume = {85},
  pages = {941--968},
  doi = {10.1007/s10898-022-01240-1}
}

@article{Bento2017,
  author = {Bento, G. C. and Ferreira, O. P. and Melo, J. G.},
  title = {Iteration-Complexity of Gradient, Subgradient and Proximal Point Methods on {R}iemannian Manifolds},
  journal = {Journal of Optimization Theory and Applications},
  year = {2017},
  volume = {173},
  pages = {548--562},
}

@article{DaCruzNeto2006,
  author = {Da Cruz Neto, J. X. and Ferreira, O. P. and Pérez, L. R. L. and others},
  title = {Convex and Monotone-Transformable Mathematical Programming Problems and a Proximal-Like Point Method},
  journal = {Journal of Global Optimization},
  year = {2006},
  volume = {35},
  pages = {53--69},

}

@article{BENTO2010564,
title = {Local convergence of the proximal point method for a special class of nonconvex functions on {H}adamard manifolds},
journal = {Nonlinear Analysis: Theory, Methods \& Applications},
volume = {73},
number = {2},
pages = {564-572},
year = {2010},
issn = {0362-546X},
author = {Bento, G.C. and Ferreira, O.P. and Oliveira, P.R. }
}

@article{Bento01022015,
author = {Bento, G.C. and Ferreira, O.P. and Oliveira, P.R.},
title = {Proximal point method for a special class of nonconvex functions on {H}adamard manifolds},
journal = {Optimization},
volume = {64},
number = {2},
pages = {289--319},
year = {2015},
publisher = {Taylor \& Francis},


}

@article{Li2009,
  title={Monotone vector fields and the proximal point algorithm on {H}adamard manifolds},
  author={Chong Li and Genaro L{\'o}pez and Victoria Mart{\'i}n-M{\'a}rquez},
  journal={Journal of the London Mathematical Society},
  year={2009},
  volume={79},
  url={https://api.semanticscholar.org/CorpusID:10158146}
}

@article{Wang2016,
author = {Wang, Jinhua and Li, Chong and Lopez, Genaro and Yao, Jen-Chih},
title = {Proximal Point Algorithms  on {H}adamard Manifolds: Linear Convergence and  Finite Termination},
journal = {SIAM Journal on Optimization},
volume = {26},
number = {4},
pages = {2696-2729},
year = {2016},
doi = {10.1137/15M1051257}
}

@article{Ferreira01042002,
author = {O. P. Ferreira and P. R. Oliveira},
title = {Proximal Point Algorithm On {R}iemannian Manifolds},
journal = {Optimization},
volume = {51},
number = {2},
pages = {257--270},
year = {2002},
publisher = {Taylor \& Francis},

}

@article{PapaQuiroz2009,
  author = {Papa Quiroz, E. A. and O. P. and Oliveira, P. R.},
  title = {Proximal Point Methods for Quasiconvex and Convex Functions with {B}regman Distances on {H}adamard Manifolds},
  journal = {Journal of Convex Analysis},
  year = {2009},
  volume = {16},
  number = {1},
  pages = {49--69}
}

@article{PapaQuiroz2012,
  author = {Papa Quiroz, E. A. and Oliveira, P. R.},
  title = {Full Convergence of the Proximal Point Method for Quasiconvex Functions on {H}adamard Manifolds},
  journal = {ESAIM: Control, Optimisation and Calculus of Variations},
  year = {2012},
  volume = {18},
  number = {2},
  pages = {483--500},

}

@article{Baygorrea2016,
  author = {Baygorrea, N. and Papa Quiroz, E. A. and Maculan, N.},
  title = {Inexact Proximal Point Methods for Quasiconvex Minimization on {H}adamard Manifolds},
  journal = {Journal of the Operational Research Society of China},
  year = {2016},
  volume = {4},
  pages = {397--424},
  doi = {10.1007/s40305-016-0133-3}
}

@Article{SouzaOliveira,
  author={J. Souza and P. Oliveira},
  title={{A proximal point algorithm for DC fuctions on Hadamard manifolds}},
  journal={Journal of Global Optimization},
  year=2015,
  volume={63},
  number={4},
  pages={797-810},
  month={December},
  doi={10.1007/s10898-015-0282-7}
}

@article{cruz2020generalized,
  author = {Cruz Neto, J.X. and Oliveira, P.R. and Soubeyran, A. and others},
  title = {A generalized proximal linearized algorithm for {DC} functions with application to the optimal size of the firm problem},
  journal = {Annals of Operations Research},
  volume = {289},
  pages = {313--339},
  year = {2020},
}

@article{An02012017,
author = {Nguyen Thai An and Nguyen Mau Nam},
title = {Convergence analysis of a proximal point algorithm for minimizing differences of functions},
journal = {Optimization},
volume = {66},
number = {1},
pages = {129--147},
year = {2017},
publisher = {Taylor \& Francis},
doi = {10.1080/02331934.2016.1253694} 
}

@book{Udriste,
  author = {Udriste, C.},
  title = {Convex Functions and Optimization Algorithms on Riemannian Manifolds},
  series = {Mathematics and Its Applications},
  publisher = {Kluwer Academic},
  year = {1994},
  address = {Dordrecht}
}

@book{Sakai,
  author    = {Sakai, T.},
  title     = {Riemannian Geometry, Translations of Mathematical Monographs},
  volume    = {149},
  publisher = {American Mathematical Society},
  address   = {Providence, RI},
  year      = {1996},
  note      = {Translated from the 1992 Japanese edition}
}

@article{Amaral2024,
  author = {Amaral, V. S. and Lopes, J. O. and Santos, P. S. M. and Silva, G. N.},
  title = {On the complexity of a quadratic regularization algorithm for minimizing nonsmooth and nonconvex functions},
  journal = {Optimization Methods and Software},
  year = {2024},
  pages = {1--23},
  doi = {10.1080/10556788.2024.2368578}
}

@article{aragon2024coderivative,
  title={Coderivative-based semi-{N}ewton method in nonsmooth difference programming},
  author={Aragón Artacho, Francisco J and Mordukhovich, Boris S and P{\'e}rez-Aros, Pedro},
  journal={Mathematical Programming},
volume={213},
  pages={385--432},
  year={2025},
  publisher={Springer}
}

@article{yin2015minimization,
  title={Minimization of 1-2 for compressed sensing},
  author={Yin, Penghang and Lou, Yifei and He, Qi and Xin, Jack},
  journal={SIAM Journal on Scientific Computing},
  volume={37},
  number={1},
  pages={A536--A563},
  year={2015},
  publisher={SIAM}
}

@article{zhang2024boosted,
  title={A Boosted-DCA with power-sum-DC decomposition for linearly constrained polynomial programs},
  author={Zhang, Hu and Niu, Yi-Shuai},
  journal={Journal of Optimization Theory and Applications},
  volume={201},
  number={2},
  pages={720--759},
  year={2024},
  publisher={Springer}
}

@article{bacak2016second,
  title={A second order nonsmooth variational model for restoring manifold-valued images},
  author={Bac{\'a}k, Miroslav and Bergmann, Ronny and Steidl, Gabriele and Weinmann, Andreas},
  journal={SIAM Journal on Scientific Computing},
  volume={38},
  number={1},
  pages={A567--A597},
  year={2016},
  publisher={SIAM}
}

@article{gotoh2018dc,
  title={{DC} formulations and algorithms for sparse optimization problems},
  author={Gotoh, Jun-ya and Takeda, Akiko and Tono, Katsuya},
  journal={Mathematical Programming},
  volume={169},
  pages={141--176},
  year={2018},
  publisher={Springer}
}

@article{zhou2023semismooth,
  title={A semismooth {N}ewton based augmented Lagrangian method for nonsmooth optimization on matrix manifolds},
  author={Zhou, Yuhao and Bao, Chenglong and Ding, Chao and Zhu, Jun},
  journal={Mathematical Programming},
  volume={201},
  number={1},
  pages={1--61},
  year={2023},
  publisher={Springer}
}

@article{bergmann2024difference,
  title={The difference of convex algorithm on {H}adamard manifolds},
  author={Bergmann, Ronny and Ferreira, Orizon P and Santos, Elianderson M and Souza, Jo{\~a}o Carlos O},
  journal={Journal of Optimization Theory and Applications},
  volume={201},
  number={1},
  pages={221--251},
  year={2024},
  publisher={Springer}
}

@article{bhattacharya2008statistics,
  title={Statistics on {R}iemannian manifolds: asymptotic distribution and curvature},
  author={Bhattacharya, Abhishek and Bhattacharya, Rabi},
  journal={Proceedings of the American Mathematical Society},
  volume={136},
  number={8},
  pages={2959--2967},
  year={2008}
}

@article{park1995lie,
  title={A {L}ie group formulation of robot dynamics},
  author={Park, Frank C and Bobrow, James E and Ploen, Scott R},
  journal={The International Journal of Robotics Research},
  volume={14},
  number={6},
  pages={609--618},
  year={1995},
  publisher={Sage Publications Sage CA: Thousand Oaks, CA}
}

@article{esposito2019total,
  title={Total variation regularization of pose signals with an application to 3{D} freehand ultrasound},
  author={Esposito, Marco and Hennersperger, Christoph and G{\"o}bl, R{\"u}diger and Demaret, Laurent and Storath, Martin and Navab, Nassir and Baust, Maximilian and Weinmann, Andreas},
  journal={IEEE Transactions on Medical Imaging},
  volume={38},
  number={10},
  pages={2245--2258},
  year={2019},
  publisher={IEEE}
}

@article{weinmann2014total,
  title={Total variation regularization for manifold-valued data},
  author={Weinmann, Andreas and Demaret, Laurent and Storath, Martin},
  journal={SIAM Journal on Imaging Sciences},
  volume={7},
  number={4},
  pages={2226--2257},
  year={2014},
  publisher={SIAM}
}

@article{bergmann2016parallel,
  title={A parallel {D}ouglas--{R}achford algorithm for minimizing ROF-like functionals on images with values in symmetric {H}adamard manifolds},
  author={Bergmann, Ronny and Persch, Johannes and Steidl, Gabriele},
  journal={SIAM Journal on Imaging Sciences},
  volume={9},
  number={3},
  pages={901--937},
  year={2016},
  publisher={SIAM}
}

@article{ferreira2024boosted,
  title={A boosted {DC} algorithm for non-differentiable DC components with non-monotone line search},
  author={Ferreira, Orizon P and Santos, Elianderson M and Souza, Jo{\~a}o Carlos O},
  journal={Computational Optimization and Applications},
  volume={88},
  number={3},
  pages={783--818},
  year={2024},
  publisher={Springer}
}

@article{ferreira2024inexact,
  title={An Inexact Boosted Difference of Convex Algorithm for Nondifferentiable Functions},
  author={Ferreira, Orizon P and Mordukhovich, Boris S and Santos, Wilkreffy and Souza, Jo{\~a}o Carlos O},
  journal={arXiv preprint arXiv:2412.05697},
  year={2024}
}

@article{mordukhovich2014easy,
  title={An easy path to convex analysis and applications (Vol. 14)},
  author={Mordukhovich, BS and Nam, NM},
  journal={Synthesis Lectures on Mathematics and Statistics Williston: Morgan \& Claypool Publishers},
  year={2014}
}

@article{aragón2020boosted,
  title={The boosted difference of convex functions algorithm for nonsmooth functions},
  author={Arag{\'o}n Artacho, Francisco J and Vuong, Phan T},
  journal={SIAM Journal on Optimization},
  volume={30},
  number={1},
  pages={980--1006},
  year={2020},
  publisher={SIAM}
}

@article{artacho2019boosted,
  title={The boosted {DC algorithm for linearly constrained {DC} programming}},
  author={Artacho, Francisco J Arag{\'o}n and Campoy, Rub{\'e}n and Vuong, Phan T},
  journal={arXiv preprint arXiv:1908.01138},
  year={2019}
}

@article{aragon2018accelerating,
  title={Accelerating the {DC} algorithm for smooth functions},
  author={Arag{\'o}n Artacho, Francisco J and Fleming, Ronan MT and Vuong, Phan T},
  journal={Mathematical Programming},
  volume={169},
  pages={95--118},
  year={2018},
  publisher={Springer}
}

@article{bajaj2022solving,
  title={Solving a continuous multifacility location problem by {DC} algorithms},
  author={Bajaj, Anuj and Mordukhovich, Boris S and Nam, Nguyen Mau and Tran, Tuyen},
  journal={Optimization Methods and Software},
  volume={37},
  number={1},
  pages={338--360},
  year={2022},
  publisher={Taylor \& Francis}
}

@article{nam2018nesterov,
  title={Nesterov’s smoothing technique and minimizing differences of convex functions for hierarchical clustering},
  author={Nam, Nguyen Mau and Geremew, Wondi and Reynolds, Sam and Tran, Tuyen},
  journal={Optimization Letters},
  volume={12},
  pages={455--473},
  year={2018},
  publisher={Springer}
}

@article{bagirov2018nonsmooth,
  title={Nonsmooth {DC} programming approach to clusterwise linear regression: optimality conditions and algorithms},
  author={Bagirov, Adil M and Ugon, Julien},
  journal={Optimization Methods and Software},
  volume={33},
  number={1},
  pages={194--219},
  year={2018},
  publisher={Taylor \& Francis}
}

@article{tao1997convex,
  title={Convex analysis approach to {DC} programming: theory, algorithms and applications},
  author={Tao, Pham Dinh and An, LT Hoai},
  journal={Acta Mathematica Vietnamica},
  volume={22},
  number={1},
  pages={289--355},
  year={1997}
}

@incollection{tao1986algorithms,
  title={Algorithms for solving a class of nonconvex optimization problems. Methods of subgradients},
  author={Pham Dinh Tao and El Bernoussi Souad},
  booktitle={North-Holland Mathematics Studies},
  volume={129},
  pages={249--271},
  year={1986},
  publisher={Elsevier}
}

@article{geremew2018dc,
  title={A {DC} programming approach for solving multicast network design problems via the {N}esterov smoothing technique},
  author={Geremew, Wondi and Nam, Nguyen Mau and Semenov, Alexander and Boginski, Vladimir and Pasiliao, E},
  journal={Journal of Global Optimization},
  volume={72},
  number={4},
  pages={705--729},
  year={2018},
  publisher={Springer}
}

@incollection{an2001dc,
  author="An, Le Thi Hoai
and Tao, Pham Dinh",
editor="Migdalas, Athanasios
and Pardalos, Panos M.
and V{\"a}rbrand, Peter",
title="D.C. Programming Approach to the Multidimensional Scaling Problem",
booktitle="From Local to Global Optimization",
year="2001",
publisher="Springer US",
address="Boston, MA",
pages="231--276",
abstract="The paper is devoted to the solution of the metric Euclidean Multidimensional Scaling Problem (MDS) and the Euclidean distance geometry problem by methods of d.c. programming developed earlier by the authors. The algorithms, called DCA (DC Algorithms), are based on the duality theory and local optimality conditions for d.c. programming. Different regularization techniques are used in order to improve the qualities (robustness, stability, convergence rate and globality of computed solutions) of DCA. Lagrangian duality without gap allows us to find interesting equivalent forms of (MDS) and a very simple expression of the dual objective function which can be used for checking globality of solutions computed by DCA. DCA (with and without regularization techniques) is described for solving both MDS problems. Requiring only matrix-vector products and only one Cholesky factorization, it is quite simple, and allows exploiting sparsity in large-scale problems. Moreover the well-known majorization method by de Leeuw can be viewed as a special case of DCA. DCA (which globally solves the trust region problem) is also presented in its parametric version to deal with the Euclidean metric MDS problem. Finally extensive numerical experiments are reported which show the robustness and efficiency of DCA for solving the Euclidean metric MDS problem, especially in the largescale setting. They also show the globality of computed solutions in the case where the dissimilarities are the Euclidean distances between the objects.",
isbn="978-1-4757-5284-7",
doi="10.1007/978-1-4757-5284-7\_11",
url="https://doi.org/10.1007/978-1-4757-5284-7\_11"
}

@article{bento2018proximal,
  title={Proximal point method for locally Lipschitz functions in multiobjective optimization of {H}adamard manifolds},
  author={Bento, Glaydston C and da Cruz Neto, Jo{\~a}o Xavier and de Meireles, Lucas V},
  journal={Journal of Optimization Theory and Applications},
  volume={179},
  number={1},
  pages={37--52},
  year={2018},
  publisher={Springer}
}

@article{ledyaev2007nonsmooth,
  title={Nonsmooth analysis on smooth manifolds},
  author={Ledyaev, Yu and Zhu, Qiji},
  journal={Transactions of the American Mathematical Society},
  volume={359},
  number={8},
  pages={3687--3732},
  year={2007}
}

@article{azagra2005nonsmooth,
  title={Nonsmooth analysis and {H}amilton--{J}acobi equations on {R}iemannian manifolds},
  author={Azagra, Daniel and Ferrera, Juan and L{\'o}pez-Mesas, Fernando},
  journal={Journal of Functional Analysis},
  volume={220},
  number={2},
  pages={304--361},
  year={2005},
  publisher={Elsevier}
}

@article{karkhaneei2019nonconvex,
  title={Nonconvex weak sharp minima on {R}iemannian manifolds},
  author={Karkhaneei, Mohammad Mahdi and Mahdavi-Amiri, Nezam},
  journal={Journal of Optimization Theory and Applications},
  volume={183},
  number={1},
  pages={85--104},
  year={2019},
  publisher={Springer}
}

@article{Bergmann2022,
    Author    = {Ronny Bergmann},
    Doi       = {10.21105/joss.03866},
    Journal   = {Journal of Open Source Software},
    Number    = {70},
    Pages     = {3866},
    Publisher = {The Open Journal},
    Title     = {Manopt.jl: Optimization on Manifolds in {J}ulia},
    Volume    = {7},
    Year      = {2022},
}

\end{document}